\newcommand{\bbR}{\mathbb{R}}
\newcommand{\bbC}{\mathbb{C}}
\newcommand{\bbZ}{\mathbb{Z}}
\newcommand{\bbN}{\mathbb{N}}
\newcommand{\ba}{\bm{a}}
\newcommand{\bb}{\bm{b}}
\newcommand{\br}{\bm{r}}
\newcommand{\bh}{\bm{h}}
\newcommand{\bN}{\bm{N}}
\newcommand{\bk}{\bm{k}}
\newcommand{\bB}{\mathbf{B}}
\newcommand{\bA}{\mathbf{A}}
\newcommand{\bI}{\mathbf{I}}
\newcommand{\calP}{\mathcal{P}}
\newcommand{\calR}{\mathcal{R}}
\newcommand{\hphi}{\hat{\phi}}
\newcommand{\hPhi}{\hat{\Phi}}
\newcommand{\vzero}{\mathbf{0}}
\newtheorem{thm}{Theorem}[section]
\newtheorem{lemma}[thm]{Lemma}
\newtheorem{proposition}[thm]{Proposition}
\newtheorem{defy}[thm]{Definition}
\newtheorem{remark}[thm]{Remark}
\newcommand\tbbint{{-\mkern -16mu\int}}
\newcommand\dbbint{{-\mkern -19mu\int}}
\newcommand\bbint{
{\mathchoice{\dbbint}{\tbbint}{\tbbint}{\tbbint}}
}
\DeclareMathOperator*{\argmin}{\mathrm{argmin}}
\newcommand{\Prox}{\mathrm{Prox}}
\newcommand{\GProx}{\mathrm{GProx}}
\begin{document}

\title{An efficient method for computing stationary states of phase field crystal models}

\author{Kai Jiang, Wei Si}
\address{
 School of Mathematics and Computational Science, Xiangtan
 University, Xiangtan, Hunan, P.R. China, 411105.
 \\
 }

\author{Chenglong Bao\corref{cor}}
\address{
 Yau Mathematical Sciences Center, Tsinghua University, Beijing, P. R. China, 100084.
 }
\cortext[cor]{Corresponding author. Email: clbao@mail.tsinghua.edu.cn}

\date{\today}

\begin{abstract}
	Computing stationary states is an important topic for phase
	field crystal (PFC) models. Great efforts have been made for
	energy dissipation of the numerical schemes when using
	gradient flows. However, it is always time-consuming due to
	the requirement of small effective time steps. In this paper, we
	propose an adaptive accelerated proximal gradient 
	method for finding the stationary states of PFC models. The
	energy dissipation is guaranteed and the convergence property
	is established for the discretized energy functional.
	Moreover, the connections between generalized proximal operator with classical (semi-)implicit and
	explicit schemes for gradient flow are given. Extensive
	numerical experiments, including two three dimensional
	periodic crystals in Landau-Brazovskii (LB) model and a two
	dimensional quasicrystal in Lifshitz-Petrich (LP) model,
	demonstrate that our approach has adaptive time steps which
	lead to significant acceleration over semi-implicit methods
	for computing complex structures. Furthermore, our result
	reveals a deep physical mechanism of the simple LB model via
	which the sigma phase is first discovered.

	\begin{description}
	\item[Keywords]
	Phase field crystal models, Stationary states, 
	Spectral collocation method,
	Semi-implicit scheme, Adaptive time step
	\item[AMS subject classifications] 35J60, 35Q74, 65N35
	\item[DOI]
	\end{description}
\end{abstract}


\maketitle


\section{Introduction}
\label{sec:intr}

The phase field crystal (PFC) model is an important approach to
describe many physical processes and material properties, 
such as the formation of ordered structures,
nucleation process, crystal growth, elastic and plastic
deformations of the lattice, dislocations,
etc\, \cite{chen2002phase, provatas2010phase}. 
More concretely, let the order parameter function be
$\phi(\br)$, the PFC model can be expressed by a free energy functional:
\begin{equation}
\begin{aligned}
	E[\phi(\br); \Theta] = G[\phi(\br); \Theta] + F[\phi(\br); \Theta],
	\label{eq:energy}
\end{aligned}
\end{equation}
where $\Theta$ are the physical parameters, $F[\phi]$ is the interaction
energy with polynomial type or log-type formulation and $G[\phi]$ is the
bulk energy that contains higher-order
linear operators to form ordered
structures \cite{brazovskii1975phase, lifshitz1997theoretical,
swift1977hydrodynamic}.
A typical interaction potential function for a bounded domain $\Omega$ is 
\begin{equation}
G[\phi] = \frac{1}{|\Omega|}\int_\Omega
\Big[\prod_{j=1}^{m}(\Delta+q_j^2)\phi \Big]^2 \,d\br, 
~~ m\in\mathbb{N}
\end{equation}
which can be used to describe the pattern formation of periodic crystals, quasicrystals and multi-polynary crystals.

In order to understand the theory of PFC models as well as
predict and guide experiments, it requires to find stationary 
states $\phi_s(\br; \Theta)$ and construct phase diagrams
of the energy functional \eqref{eq:energy}.
Mathematically, denote $V$ to be a feasible space,
one should solve the minimization problem
\begin{equation}
\begin{aligned}
\min_{\phi\in V} E[\phi(\br); \Theta],
	\label{eq:minEnergy}
\end{aligned}
\end{equation}
with different physical parameters $\Theta$, which brings
the tremendous computational burden.
Therefore, within appropriate spatial discretization, the goal of this paper is to develop an efficient
and robust numerical method for solving \eqref{eq:minEnergy} with
guaranteed convergence.

Most existing numerical methods for computing the stationary states
of PFC model can be classified into two categories. 
The first class of numerical methods solves the steady
nonlinear Euler-Lagrange equations of \eqref{eq:minEnergy}
through different spatial discretization approaches.
The second class of numerical methods has been designed via the
formulation of nonlinear gradient flow equations. 
In these numerical PDE approaches, the time-dependent nonlinear
gradient flows are discretized in space via different numerical methods.
In these time discretized approaches,
great efforts have been made to keep the energy dissipation 
which is crucial for convergence.
Typical energy stable schemes to the gradient flows include convex
splitting and stabilized factor methods, and recently developed
invariant energy quadrature, and scalar auxiliary variable
approaches for a modified energy\,\cite{shen2018scalar}. 
It is noted that the gradient flow approach is able to describe the
quasi-equilibrium behavior of PFC systems.  
Numerically, the gradient flow is discretized in both
space and time domain via different discretization techniques and the
stationary state is obtained with a proper choice of initial data.

Under an appropriate spatial
discretization scheme, the infinite dimensional problem
\eqref{eq:minEnergy} can be formulated as a 
minimization problem over a finite dimensional space.
Thus, there may exist alternative numerical methods that can
converge to the steady states quickly
by using modern optimization techniques. Similar ideas have
been shown success in computing steady states of 
the Bose-Einstein condensate \cite{wu2017regularized} and the calculation of density functional theory \cite{ulbrich2015proximal,liu2015analysis}.
In the PFC models, the discretized energy is nonlinear and
non-convex which consists of two parts:
bulk energy and interaction energy. Motivated by
the semi-implicit scheme and the accelerated proximal gradient
(APG) method \cite{beck2009fast,tseng2008accelerated} which has been successfully applied in image
processing and machine learning, we propose
an efficient numerical method for calculating the steady states of \eqref{eq:minEnergy}. 
As the traditional APG method is proposed for convex
problem and its oscillation phenomenon slows down the
convergence\,\cite{o2015adaptive,su2014differential}, 
the restart scheme has been used for accelerating the convergence. Moreover, the numerical
speed can be further accelerated by using the line search
starting with Barzilai-Borwein steps \cite{bao2018coherence}. 
The connection of classical explicit/implicit schemes
in gradient flows and proximal gradient methods is also built by 
defining a generalized proximal operator.
Extensive numerical experiments have demonstrated that our approach can
quickly reach the vicinity of an optimal solution with moderately
accuracy, even for very challenge cases.
As a byproduct, our numerical result reaveals a deep physical
intension of a simple PFC model, the Landau-Brazovskii (LB) model, by
obtaining the sigma phase.

The rest of this paper is organized as follows. Different
discretizations of the energy functional via the 
Fourier pseudospectral approach and the projection
method are introduced in section \ref{sec:discretization}. 
In section \ref{sec:method}, we present the gradient type method
and the adaptive APG method for solving the discretized
minimization problem.
The connection between our proposed approach and some existing
time discretized schemes in these numerical methods for solving
gradient-flow equations has been built in section
\ref{sec:connection}.  
Numerical results are reported in section \ref{sec:result} to
illustrate the efficiency and accuracy of our algorithms. 
Finally, some concluding remarks are given in section
\ref{sec:conclusion}.

\section{Physical models} 
\label{sec:model}

Two classes of PFC models are considered in the paper. The first
one is the Landau-Brazovskii (LB) model which describes periodic
structures\,\cite{brazovskii1975phase}.
The LB model was introduced to investigate the character of
phases and phase transition of periodic crystals. It has been discovered
in many different scientific fields, e.g., polymeric 
materials\,\cite{shi1996theory}. 
In particular, the energy functional of LB model is 
\begin{equation}
\begin{aligned}
	E_{LB}[\phi(\br)] = \frac{1}{|\Omega|}\int_\Omega
	\left\{\frac{\xi^2}{2}[(\Delta +1)\phi]^2 + \frac{\tau}{2!}\phi^2 
	-\frac{\gamma}{3!}\phi^3 + \frac{1}{4!}\phi^4 \right\}\,
	d \br,
	\label{eq:LB}
\end{aligned}
\end{equation}
where $\phi(\br)$ is a real-valued function which measures the
order of system in terms of order parameter.
$\Omega$ is the system volume, $\xi$ is the bare
correlation length, $\tau$ is the dimensionless reduced
temperature, $\gamma$ is phenomenological coefficient.  
Compared with double-well bulk energy,
the cubic term in the LB functional helps us study
the first-order phase transition. 

The second one is the Lifshitz-Petrich (LP) model that can
simulate quasiperiodic structures, such as the bi-frequency 
excited Faraday wave\,\cite{lifshitz1997theoretical}, and 
the explanation of the stability of soft-matter
quasicrystals\,\cite{lifshitz2007soft, jiang2015stability}.
Before we present the LP model, an introduction of the average
spacial integral, so-called almost periodic integral, is necessary.
For a space-filling structure, such as the quasicrystal, the
average spacial integral can be defined as
\begin{equation}
\begin{aligned}
	\bbint = \lim_{R\rightarrow \infty}\frac{1}{|B_R|}\int_{B_R},
	\label{eq:APintegral}
\end{aligned}
\end{equation}
where $B_R\subset\bbR^d$ is the ball centred at origin with radii $R$. 
Using the above notation, the energy functional of LP model is
given by 
\begin{equation}
\begin{aligned}
	E_{LP}[\phi(\br)] = \bbint
	\left\{\frac{c}{2}[(\Delta +q_1^2)(\Delta + q_2^2)\phi]^2 +
	\frac{\varepsilon}{2}\phi^2 
	-\frac{\kappa}{3}\phi^3 + \frac{1}{4}\phi^4 \right\}\,
	d \br,
	\label{eq:LP}
\end{aligned}
\end{equation}
$c$ is the energy penalty, $\varepsilon$ and $\kappa$ are
phenomenological coefficients.  

Formally, the difference between the LB and LP energy functional
is the number of length-scale governed by the differential term.
The LB model has a one-length-scale which can be used to
study the phase behavior of periodic
structures\,\cite{brazovskii1975phase, zhang2008efficient}, while the LP model 
possesses a two-length-scale that can be used to studied the
formation and stability of
quasicrystals\,\cite{lifshitz1997theoretical, lifshitz2007soft,
jiang2015stability, dotera2014mosaic}. 

\section{Discretization of the energy functional}
\label{sec:discretization}

In this section, we introduce different discretization schemes of the
energy functionals \eqref{eq:LB} and \eqref{eq:LP}, and reduce
them to finite dimensional minimization problems. 
Two classes of stationary states are considered. 
The first class of stationary states is periodic in LB model which 
can be described in a bounded domain. Thus we can truncate the energy
functional from the whole space $\bbR^d$ to a bounded domain
$\Omega$ with periodic boundary condition. 
Then we employ Fourier pseudospectral method to discretize LB
energy functional.
The second class of stationary phases can be quasicrystals in LP model. 
For these structures, the discretization of the energy functional 
in a bounded domain results in a significant Diophantine
approximation error. In this paper, we apply with the projection
method \cite{jiang2014numerical}, a high dimensional
interpretation approach, to discretize the LP energy function
\eqref{eq:LP}, which can avoid the Diophantine approximation error.

\subsection{Fourier pseudospectral discretization}
\label{subsec:fp}

Each of the $d$-dimensional periodic system can be described by a Bravis lattice 
\begin{equation}
\begin{aligned}
	\calR = \sum_{j=1}^d \ell_j \ba_j, ~~~ \ell_j\in\bbZ,
\end{aligned}
\end{equation}
where the vector $\ba_j\in\bbR^d$ forms the primitive Bravis lattice
$\bA=(\ba_1,\ba_2,\dots,\ba_d)\in\bbR^{d\times d}$.
The smallest possible periodicity, or named the unit cell, of the system is 
\begin{equation}
\begin{aligned}
	\Omega = \sum_{j=1}^d \zeta_j \ba_j,  ~~~ \zeta_j\in[0,1).
\end{aligned}
\end{equation}
The associated reciprocal lattice is 
\begin{equation}
\begin{aligned}
	\calR^* = \sum_{j=1}^d h_j \bb_j, ~~~ h_j\in\bbZ. 
\end{aligned}
\end{equation}
The primitive reciprocal lattice vector $\bb_j\in\bbR^d$ satisfies the
dual relationship
\begin{equation}
\begin{aligned}
	\ba_i \bb_j = 2\pi \delta_{ij}.
\end{aligned}
\end{equation}
Then the periodic function on the Bravis lattice, i.e.,
$\phi(\br)=\phi(\br+\calR)$, can be expanded as 	
\begin{equation}
\begin{aligned}
	\phi(\br) = \sum_{\bh\in\bbZ^d} \hphi(\bh) e^{i (\bB\bh)^T
	\br}, ~~~ \br\in\Omega,
	\label{eq:LBenergy:infinity}
\end{aligned}
\end{equation}
where $\bh=(h_1,h_2,\dots,h_d)^T$,
$\bB=(\bb_1,\bb_2,\dots,\bb_d)\in\bbR^{d\times d}$ is invertible. 
The coefficient,
$\hphi(\bh)=(1/|\Omega|)\int_{\Omega}\phi(\br)e^{-i(\bB\bh)^T
\br}\,d\br$, satisfies 
\begin{equation}
\begin{aligned}
	X := \left\{\{\hphi(\bh)\}_{\bh\in\bbZ^d}:
	\hphi(\bh)\in\mathbb{C}, ~
	\sum_{\bh\in\bbZ^d}|\hphi(\bh)|<\infty \right\}.
\end{aligned}
\end{equation}
In numerical computations, we need to minimize the LB energy
functional \eqref{eq:LB} in a finite dimensional subspace. 
More precisely, let
$\bN=(N_1+1, N_2+1, \dots, N_d+1)\in \bbN^d$, and 
\begin{equation}
\begin{aligned}
	X_{\bN} := \{\hphi(\bh)\in X:  \hphi(\bh) = 0, 
	~\mbox{for
	all}~ |h_j|> N_j/2, ~ j=1,2,\dots,d \}.  
\end{aligned}
\end{equation}
The number of elements in the set is $N=(N_1+1)(N_2+1) \cdots (N_d+1)$. 
The order parameter can be projected into the finite
dimensional space $X_{\bN}$, i.e.,
\begin{equation}
\begin{aligned}
	\phi(\br) \approx \sum_{\hphi(\bh)\in X_{\bN}} \hphi(\bh) e^{i (\bB\bh)^T
	\br}, ~~~ \br\in\Omega.
	\label{eq:LBenergy:finity}
\end{aligned}
\end{equation}
Due to the orthonormal condition 
\begin{equation}
\begin{aligned}
	\frac{1}{|\Omega|}\int_{\Omega} e^{i(\bB\bh_1)^T \br}
	e^{-i (\bB\bh_2)^T \br} \,d\br = \delta_{\bh_1 \bh_2},
\end{aligned}
\end{equation}
the LB energy functional $E_{LB}$ can be approximated as
\begin{equation*}
\begin{aligned}
	E_h&[\hPhi] = 
\frac{\xi^{2}}{2}\sum_{\bh_1+\bh_2=\bm{0}}[1-(\bB\bh_1)^{T}(\bB\bh_2)]^2 \hphi(\bh_1)\hphi(\bh_2)
+ \frac{\tau}{2!}\sum_{\bh_1+\bh_2=\bm{0}} \hphi(\bh_1)\hphi(\bh_2)
\\
& - \frac{\gamma}{3!} \sum_{\bh_1+\bh_2+\bh_3=\bm{0}}\hphi(\bh_1)\hphi(\bh_2)\hphi(\bh_3)
+ \frac{1}{4!} \sum_{\bh_1+\bh_2+\bh_3+\bh_4=\bm{0}}\hphi(\bh_1)\hphi(\bh_2)\hphi(\bh_3)\hphi(\bh_4)
\end{aligned}
\end{equation*}
where $\bh_j\in\bbZ^d$, Fourier coefficient $\hphi(\bh)\in X_{\bN}$, and
$\hPhi = (\hphi_1, \dots, \hphi_N)^T\in\bbC^N$. 
The convolutions in the above expression can be calculated
by Fourier pseudospectral method through the fast Fourier
transform (FFT). 
Therefore, it reduces to a finite dimensional minimization problem:
\begin{equation}\label{LB:Discrete}
	\min_{\hPhi\in \mathbb{C}^{\mathbf{N}}}
E_h[\hPhi] = G_h[\hPhi] + F_h[\hPhi]
\end{equation}
where $G_h$ and $F_h$ are the discretized interaction 
and bulk energy, respectively. The gradient of $E_h[\hPhi]$ is 
\begin{equation}
\begin{aligned}
	\nabla E_h[\hPhi] = \xi^{2} \Lambda \hPhi + \tau \hPhi 
	- \frac{\gamma}{2} \mathcal{F}_N^{-1}((\mathcal{F}_N \hPhi)^2) + 
	\frac{1}{6} \mathcal{F}_N^{-1}((\mathcal{F}_N \hPhi)^3)
\end{aligned}
\end{equation}
where $\Lambda\in\bbC^{N\times N}$ is a diagonal matrix with
entries $[1-(\bB\bh)^{T}(\bB\bh)]^2$ and
$\mathcal{F}_N\in\bbC^{N\times N}$ is the discretized Fourier
transform matrix.

\subsection{Projection method discretization}
\label{subsec:PM}

For the $d$-dimensional quasicrystals which are the space-filling structures, the
spatial integral $\frac{1}{|\Omega|}\int_{\Omega}$ in the energy
functional \eqref{eq:energy} shall be instead by the almost periodic
spacial integral $\bbint$, as defined by Eq.\,\eqref{eq:APintegral}.
We immediately have the following orthonormal property:
\begin{equation}
\begin{aligned}
	\bbint e^{i\bk \cdot \br} e^{-i\bk' \cdot \br}\,d\br =
	\delta_{\bk \bk'}, ~~~ \forall \bk, \bk' \in \bbR^d.
	\label{eq:AP:orth}
\end{aligned}
\end{equation}
For an almost periodic function, the average transformation is 
\begin{equation}
\begin{aligned}
	\hphi(\bk) = \bbint \phi(\br)e^{-i\bk\cdot \br}\,d\br, ~~
	\bk\in\bbR^d,
\end{aligned}
\end{equation}
and it is well defined \cite{katznelson2004anintroduction} .
In this paper, we carry out the above computation in a higher
dimension using the projection method which
is based on the fact that a $d$-dimensional
quasicrystal can be embedded into an $n$-dimensional periodic
structure ($n \geqslant d$)\,\cite{hiller1985crystallographic}. 
Using the projection method, the order parameter $\phi(\br)$ is
\begin{equation}
    \phi(\br) = \sum_{\bh\in\bbZ^n} \hphi(\bh)
    e^{i[(\mathcal{P}\cdot\mathbf{B}\bh)^{T}\cdot\br]},
    ~~\br\in\mathbb{R}^d,
    \label{eq:pm}
\end{equation}
where $\mathbf{B}\in\bbR^{n\times n}$ is
invertible, related to the $n$-dimensional primitive
reciprocal lattice and the projection matrix $\mathcal{P}\in\bbR^{d\times n}$ depends on the property of quasicrystals, such as
rotational symmetry\cite{hiller1985crystallographic}.
The Fourier coefficient $\hphi(\bh)$ satisfies
\begin{equation}
\begin{aligned}
	X := \left\{(\hphi(\bh))_{\bh\in\bbZ^n}:
	\hphi(\bh)\in\mathbb{C}, ~
	\sum_{\bh\in\bbZ^n}|\hphi(\bh)|<\infty \right\}.
\end{aligned}
\end{equation}
Again, in practice, we need to minimize the LP energy
functional \eqref{eq:LB} in a finite dimensional subspace. 
More precisely, let
$\bN=(N_1, N_2, \dots, N_n)\in \bbN^n$, and 
\begin{equation}
\begin{aligned}
	X_{\bN} := \{\hphi(\bh)\in X:  \hphi(\bh) = 0, 
	~\mbox{for
	all}~ |h_j|> N_j/2, ~ j=1,2,\dots,n \}.  
\end{aligned}
\end{equation}
The number of elements in the set is $N=(N_1+1)(N_2+1) \cdots (N_n+1)$. 
Together with \eqref{eq:AP:orth} and \eqref{eq:pm}, the discretized energy function \eqref{eq:LP} is
\begin{equation}
\begin{aligned}
	E_h[\hat{\Phi}] ~= ~&
	\frac{c}{2}\sum_{\bh_1 + \bh_2 = 0}
	\left[q_1^2-(\mathcal{P}\mathbf{B}\bh)^T(\mathcal{P}\mathbf{B}\bh)\right]^2
	\left[q_2^2-(\mathcal{P}\mathbf{B}\bh)^T(\mathcal{P}\mathbf{B}\bh)\right]^2
	\hphi(\bh_1)\hphi(\bh_2) 
	\\
	&+\frac{\varepsilon}{2}\sum_{\bh_1+\bh_2={\bm 0}}\hphi(\bh_1)\hphi(\bh_2) 
	-\frac{\kappa}{3}\sum_{\bh_1+\bh_2+\bh_3={\bm 0}}\hphi(\bh_1)\hphi(\bh_2)\hphi(\bh_3) 
	\\
	&
	+\frac{1}{4}\sum_{\bh_1+\bh_2+\bh_3+\bh_4={\bm
	0}}\hphi(\bh_1)\hphi(\bh_2)\hphi(\bh_3)\hphi(\bh_4),
\end{aligned}
	\label{eq:LPfinite}
\end{equation}
where $\bh_j\in\bbZ^n$, , $\hphi_j\in X_{\bN}$, $j=1,2,\dots,4$,
$\hat{\Phi}=(\hphi_1, \hphi_2, \dots, \hphi_N)\in\bbC^{N}$.
It is clear that the nonlinear
(quadratic, cubic and cross) terms in Eq.\,\eqref{eq:LPfinite} are
$n$-dimensional convolutions in the reciprocal space. A direct
evaluation of these convolution terms is extremely expensive.
Instead, these terms are simple multiplication in the
$n$-dimensional real space. Again, the efficient pseudospectral approach is
applied to calculate these convolutions in Eq.\,\eqref{eq:LPfinite}
through the $n$-dimensional FFT. 

Therefore, it leads to the following finite dimensional minimization problem:
\begin{equation}
	 \min_{\hPhi\in \mathbb{C}^{\mathbf{N}}} 
E_h[\hPhi] = G_h[\hPhi] + F_h[\hPhi],
\end{equation}
where $G_h$ and $F_h$ are the discretized interaction
and bulk energies. The gradient of $E_h[\hPhi]$ is 
\begin{equation}
\begin{aligned}
	\nabla E_h[\hPhi] = \xi^{2} \Lambda \hPhi + \tau \hPhi 
	- \frac{\gamma}{2} \mathcal{F}_N^{-1}((\mathcal{F}_N \hPhi)^2) + 
	\frac{1}{6} \mathcal{F}_N^{-1}((\mathcal{F}_N \hPhi)^3)
\end{aligned}
\end{equation}
where $\Lambda\in\bbC^{N\times N}$ is a diagonal matrix with
entries $[q_1^2-(\mathcal{P}\bB\bh)^{T}(\mathcal{P}\bB\bh)]^2
[q_2^2-(\mathcal{P}\bB\bh)^{T}(\mathcal{P}\bB\bh)]^2$.
The $\mathcal{F}_N\in\bbC^{N\times N}$ is the discretized Fourier
transform matrix and $\mathcal{F}_N^{-1}$ is the corresponding
inverse discretized Fourier transform matrix.
In the following, we will neglect the superscript of hat for simplicity.

\section{The proposed numerical approach}
\label{sec:method}
In this section, we first review the classical semi-implicit
method and accelerated proximal gradient (APG) method and then
propose the adaptive APG method with proved convergence. Finally,
the connection of generalized proximal operator with gradient flows approaches is present.

\subsection{Semi-implicit scheme}
\label{subsec:sis}

The semi-implicit scheme is a simple but useful approach for
finding the stationary state based on gradient flows. For example, 
the Allen-Cahn equation of the discretized energy functional is 
\begin{equation}
\begin{aligned}
	\Phi_t = -\nabla G_h[\Phi] - \nabla F_h[\Phi]
	\label{eq:AC}
\end{aligned}
\end{equation}
with the periodic condition and $t$ is the spurious time variable. 
Given an initial value $\Phi_0$ and the time step $\alpha$, the semi-implicit scheme is
\begin{equation}
\begin{aligned}
	\frac{1}{ \alpha }(\Phi_{k+1}-\Phi_{k}) = 
	-\nabla G_h[\Phi_{k+1}] - \nabla F_h[\Phi_k],
	\label{eq:sis}
\end{aligned}
\end{equation}
where $\Phi_k$ is the approximation of the solution at $k\alpha$, i.e., $\Phi(k \alpha )$. 
The semi-implicit scheme satisfies the following
energy dissipation property.
\begin{thm}
	\label{thm:sis}
	Let $E_h[\Phi] = F_h[\Phi] + G_h[\Phi]$. Assume that there
	exists a constant $L>0$ such that the bulk energy $F_h[\Phi]$ satisfies 
	$\max_{\Phi\in \bbC^N} \|\nabla^2 F_h[\Phi]\|_2 \leq L$, 
	and the time step length $ \alpha  \leq 1/L$, then
	the solutions of \eqref{eq:sis} satisfy 
	\begin{equation}
	\begin{aligned}
		E_h[\Phi_{k+1}] \leq E_h[\Phi_k], ~~~~ \forall k \geq 0.
	\end{aligned}
	\end{equation}
\end{thm}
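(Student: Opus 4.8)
\textit{The plan is to} derive the energy inequality by combining two one-sided quadratic estimates — an upper bound for the explicitly-treated part $F_h$ supplied by its bounded Hessian, and a convexity (gradient) inequality for the implicitly-treated quadratic part $G_h$ — and then to cancel the resulting first-order cross terms by substituting the scheme \eqref{eq:sis} itself. Throughout, I read all inner products $\langle\cdot,\cdot\rangle$ on $\bbC^N$ as real parts, so that $\langle\nabla E_h[\Phi],\Psi\rangle$ is the directional derivative of the real-valued energy $E_h$ in the direction $\Psi$.

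\textit{First I would} record the descent lemma for $F_h$. Because $\max_{\Phi}\|\nabla^2 F_h[\Phi]\|_2\le L$, integrating $\nabla F_h$ along the segment joining $\Phi_k$ to $\Phi_{k+1}$ yields the quadratic upper bound
\[
  F_h[\Phi_{k+1}] \le F_h[\Phi_k] + \langle \nabla F_h[\Phi_k],\,\Phi_{k+1}-\Phi_k\rangle + \frac{L}{2}\,\|\Phi_{k+1}-\Phi_k\|^2 .
\]
\textit{Second,} I would use that $G_h$ is quadratic with constant Hessian $\nabla^2 G_h = \xi^2\Lambda \succeq 0$ — the diagonal entries of $\Lambda$ are squares, hence nonnegative, so $G_h$ is convex. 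The gradient inequality evaluated at the point $\Phi_{k+1}$ gives
\[
  G_h[\Phi_{k+1}] \le G_h[\Phi_k] + \langle \nabla G_h[\Phi_{k+1}],\,\Phi_{k+1}-\Phi_k\rangle ,
\]
which is precisely the exact second-order Taylor identity for the quadratic $G_h$ with its nonpositive remainder $-\tfrac12(\Phi_{k+1}-\Phi_k)^{T}\nabla^2 G_h\,(\Phi_{k+1}-\Phi_k)$ discarded.

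\textit{Third,} I would add the two displays, so that $E_h[\Phi_{k+1}]=G_h[\Phi_{k+1}]+F_h[\Phi_{k+1}]$ is bounded by $E_h[\Phi_k]$ plus $\langle \nabla G_h[\Phi_{k+1}]+\nabla F_h[\Phi_k],\,\Phi_{k+1}-\Phi_k\rangle + \tfrac{L}{2}\|\Phi_{k+1}-\Phi_k\|^2$. The scheme \eqref{eq:sis} states $\nabla G_h[\Phi_{k+1}]+\nabla F_h[\Phi_k]=-\tfrac1\alpha(\Phi_{k+1}-\Phi_k)$, so the cross term collapses to $-\tfrac1\alpha\|\Phi_{k+1}-\Phi_k\|^2$ and
\[
  E_h[\Phi_{k+1}] \le E_h[\Phi_k] - \Big(\frac{1}{\alpha}-\frac{L}{2}\Big)\|\Phi_{k+1}-\Phi_k\|^2 .
\]
Since $\alpha\le 1/L$ gives $\tfrac1\alpha-\tfrac L2 \ge \tfrac L2>0$, the subtracted term is nonnegative and $E_h[\Phi_{k+1}]\le E_h[\Phi_k]$ follows (indeed strictly, unless $\Phi_{k+1}=\Phi_k$ is a fixed point).

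\textit{The hard part} is not the algebra but the two structural points it rests on. The first is the convexity $\nabla^2 G_h\succeq0$: this is exactly what the semi-implicit split buys, and it is where the specific form of the interaction energy with its nonnegative multiplier $\Lambda$ is essential — a naive fully-explicit treatment would lose the sign of the $G_h$ remainder. The second is the complex bookkeeping over $\bbC^N$: one must consistently work with $\mathrm{Re}\langle\cdot,\cdot\rangle$ (equivalently, pass to the real degrees of freedom dictated by the Hermitian symmetry $\hphi(-\bh)=\overline{\hphi(\bh)}$ that keeps $\phi$ real-valued) and verify that both the descent lemma and the gradient inequality are applied to the real energy, so the quadratic remainders carry the signs used above.
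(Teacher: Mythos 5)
Your proof is correct, and it reaches the paper's conclusion by a slightly different (and marginally sharper) route. The paper recognizes the semi-implicit step \eqref{eq:sis} as the optimality condition of the proximal subproblem $\min_\Phi F_h[\Phi_k]+\langle\nabla F_h[\Phi_k],\Phi-\Phi_k\rangle+\frac{1}{2\alpha}\|\Phi-\Phi_k\|^2+G_h[\Phi]$, compares the subproblem's value at the minimizer $\Phi_{k+1}$ with its value at the competitor $\Phi=\Phi_k$, and then applies the same descent lemma for $F_h$; this yields the decrease constant $\frac{1}{2\alpha}-\frac{L}{2}$. You instead keep the scheme as an algebraic identity, pair the descent lemma for $F_h$ with the first-order convexity inequality for $G_h$ evaluated at $\Phi_{k+1}$, and substitute \eqref{eq:sis} to collapse the cross term; this gives the constant $\frac{1}{\alpha}-\frac{L}{2}$, which is strictly better and makes the sufficient-decrease estimate slightly stronger. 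Both arguments rest on exactly the same two ingredients --- the Hessian bound on $F_h$ and the convexity of the implicitly treated part $G_h$ --- but your version makes the latter explicit ($\nabla^2 G_h=\xi^2\Lambda\succeq 0$ since the diagonal entries of $\Lambda$ are squares), whereas the paper uses it silently: its assertion that \eqref{eq:sis} implies $\Phi_{k+1}\in\argmin$ of the subproblem is valid only because $G_h[\Phi]+\frac{1}{2\alpha}\|\Phi-\Phi_k\|^2$ is convex, so the stationarity condition characterizes a global minimizer. Your remark about consistently using $\mathrm{Re}\,\langle\cdot,\cdot\rangle$ on $\bbC^N$ (equivalently, the real degrees of freedom enforced by Hermitian symmetry) is likewise a bookkeeping point the paper leaves implicit; it does not change the substance of either argument.
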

\begin{proof}
	From \eqref{eq:sis}, it is easy to know
	\begin{equation*}
	\Phi_{k+1}\in\argmin_{\Phi} F_h[\Phi_k]+\langle \nabla F_h[\Phi_k],\Phi-\Phi_k\rangle + \frac{1}{2 \alpha }\|\Phi-\Phi_k\|^2+G_h[\Phi].
	\end{equation*}
	It implies
	\begin{equation*}
	\begin{aligned}
	F_h[\Phi_k]+G_h[\Phi_k]&\geq F_h[\Phi_k]+\langle\nabla
	F_h[\Phi_k],\Phi_{k+1}-\Phi_k\rangle +\frac{1}{2 \alpha }\|\Phi_{k+1}-\Phi_k\|^2+G_h[\Phi_{k+1}]\\
	&\geq F_h[\Phi_{k+1}]+G_h[\Phi_{k+1}]+\left(\frac{1}{2 \alpha }-\frac{L}{2}\right)\|\Phi_{k+1}-\Phi_k\|^2,
	\end{aligned}
	\end{equation*}
	where the last inequality is from the Taylor expansion of $F_h$ and the boundedness constraint on $\nabla^2 F_h$.
\end{proof}
Therefore, to satisfy the energy dissipation law, the time step length
$ \alpha $ depends on the Lipschitz
constant $L$. In a general PFC model, 
the universal Lipschitz constant $L$ may not exist or be very large in bounded domain which leads to a small time step and slows down the convergence speed. 
Despite its strict requirements on $ \alpha $ in theory, the semi-implicit scheme 
works well in practice which inspires us a further exploration of the semi-implicit scheme. In the following context, we will combine
modern optimization approaches and the semi-implicit scheme to obtain a
more efficient approach.

\subsection{Accelerated proximal gradient (APG) method}
\label{subsec:APG}

The classical APG method \cite{beck2009fast,tseng2008accelerated} is designed for solving the convex composite problem:
\begin{equation}\label{apg:form}
\min_{x\in\mathbb{H}}~H(x) = g(x) + f(x)
\end{equation}
where $\mathbb{H}$ is the finite dimensional Hilbert space
equipped with the inner product $\langle\cdot,\cdot\rangle$,
$g$ and $f$ are both continuously convex and $\nabla f$ has a
Lipschitz constant $L$, i.e.
\begin{equation*}
\|\nabla f(x)-\nabla f(y)\|\leq L\|x-y\|,~\forall x,y\in\mathbb{H}.
\end{equation*}
Given initializations $x_1 = x_0$ and $t_0 = 1$, the APG method consists of the following steps:
\begin{subequations}
	\begin{align}
	& t_k = (\sqrt{4(t_{k-1})^2+1}+1)/2,  \label{apg:1}\\
	&  y_k =  x_k+\frac{t_{k-1}-1}{t_k}( x_k- x_{k-1}), \label{apg:2}\\
	&  x_{k+1} = \Prox_{g}^{\alpha}( y_k - \alpha\nabla f( y_k)), \label{apg:3}
	\end{align}
\end{subequations}
where $\alpha\in(0,1/L]$ and the mapping $\Prox_g^\alpha(\cdot):\mathbb{R}^n\mapsto\mathbb{R}^n$ is defined as
\begin{equation}\label{mapping:proximal}
\Prox_g^\alpha (x) = \argmin_y~\left\{g(y) + \frac{1}{2\alpha}\|y-x\|^2\right\}.
\end{equation}
It is noted that the proximal map in \eqref{mapping:proximal} is well defined as $g$ is convex. Moreover, the step size $\alpha$ can be set adaptively as long as the following inequality holds:
\begin{equation*}
H(x_{k+1})\leq Q_{\alpha_k}(x_{k+1},x_k)\leq H(x_k)
\end{equation*}
where 
\begin{equation*}
Q_\alpha(x,y) = f(y)+\langle x-y,\nabla f(y)\rangle +\frac{1}{2\alpha}\|x-y\|^2+g(x).
\end{equation*}
The APG method has the attractive convergence property as follows.
\begin{thm}[\cite{beck2009fast}]
	Let $\{x_k\},\{y_k\}$ be the sequence generated by the \eqref{apg:1}-\eqref{apg:3} and $H^{*}$ be the optimal objective value of \eqref{apg:form} and $X_{*}$ be the set of minimizers. For any $k\geq1$, we have
	\begin{equation}\label{apg:conv}
	H(x_k)-H^{*} \leq \frac{2\|x_0-x^{*}\|^2}{\alpha(k+1)^2},\quad\forall x^{*}\in X_{*}.
	\end{equation}
\end{thm}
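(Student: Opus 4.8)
The plan is to reproduce the estimate-sequence argument of Beck and Teboulle, whose engine is a single descent inequality for the proximal-gradient map that I would then apply at two carefully chosen reference points.

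First I would establish the basic estimate: for an arbitrary $y$, writing $x^{+}=\Prox_g^\alpha\big(y-\alpha\nabla f(y)\big)$, for every $z\in\mathbb{H}$
\begin{equation*}
2\alpha\big(H(z)-H(x^{+})\big)\;\ge\;\|x^{+}-z\|^2-\|y-z\|^2 .
\end{equation*}
To prove this I would combine three ingredients. The $L$-smoothness of $f$ with $\alpha\le 1/L$ gives the descent bound $H(x^{+})\le Q_\alpha(x^{+},y)$. The first-order optimality condition for the minimizer defining $\Prox_g^\alpha$ in \eqref{mapping:proximal} supplies a subgradient $\gamma\in\partial g(x^{+})$ with $\nabla f(y)+\gamma=-\tfrac{1}{\alpha}(x^{+}-y)$. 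Convexity of $f$ and $g$ then lower-bounds $H(z)$ by its linearizations at $y$ and $x^{+}$. Substituting the optimality relation and rearranging with the identity $\|x^{+}-z\|^2-\|y-z\|^2=\|x^{+}-y\|^2+2\langle x^{+}-y,\,y-z\rangle$ yields the claim; this part is mechanical once the subgradient relation is written down.

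Next I would set $v_k=H(x_k)-H^{*}$ and apply the basic estimate at iteration $k$ with $y=y_k$, $x^{+}=x_{k+1}$, for the two reference points $z=x_k$ and $z=x^{*}$. Multiplying the first by $t_k-1$, adding the second, and multiplying through by $t_k$, I would invoke the recursion \eqref{apg:1} in the form $t_k^2-t_k=t_{k-1}^2$ to collapse the function-value terms into $t_{k-1}^2 v_k-t_k^2 v_{k+1}$. For the quadratic terms I would introduce $u_k=t_k x_{k+1}-(t_k-1)x_k-x^{*}$ and use the momentum rule \eqref{apg:2}, which is engineered precisely so that $t_k y_k-(t_k-1)x_k-x^{*}=u_{k-1}$ and $t_k(x_{k+1}-y_k)=u_k-u_{k-1}$; the Pythagoras identity then turns the quadratic terms into $\|u_k\|^2-\|u_{k-1}\|^2$. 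The outcome is the telescoping inequality
\begin{equation*}
2\alpha\big(t_{k-1}^2 v_k-t_k^2 v_{k+1}\big)\;\ge\;\|u_k\|^2-\|u_{k-1}\|^2 .
\end{equation*}

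I expect this recombination to be the main obstacle: it works only because the momentum coefficient $(t_{k-1}-1)/t_k$ in \eqref{apg:2} and the quadratic recursion \eqref{apg:1} are exactly matched, so verifying that $t_k(x_{k+1}-y_k)=u_k-u_{k-1}$ and that the cross terms assemble into $\|u_k\|^2-\|u_{k-1}\|^2$ requires careful bookkeeping, while everything before and after is routine. To finish, I would note that the displayed inequality says the sequence $t_{k-1}^2 v_k+\tfrac{1}{2\alpha}\|u_{k-1}\|^2$ is nonincreasing, telescope it down to the initial index, and absorb the first function-value gap by one extra use of the basic estimate at the initial prox step (which bounds it by $\tfrac{1}{2\alpha}(\|x_0-x^{*}\|^2-\|x_1-x^{*}\|^2)$, so that the $\|x_1-x^{*}\|^2$ terms cancel). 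This leaves $t_{k-1}^2 v_k\le \tfrac{1}{2\alpha}\|x_0-x^{*}\|^2$. Finally an elementary induction on \eqref{apg:1} with $t_0=1$ gives $t_{k-1}\ge (k+1)/2$, and substituting this lower bound produces the stated rate $H(x_k)-H^{*}\le 2\|x_0-x^{*}\|^2/\big(\alpha(k+1)^2\big)$ for every $x^{*}\in X_{*}$.
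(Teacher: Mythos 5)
The paper gives no proof of this theorem --- it simply cites Beck and Teboulle --- and your proposal is a correct, faithful reconstruction of exactly the argument in that reference: the three-point descent lemma for the prox-gradient map, the combination of the two instances ($z=x_k$ and $z=x^*$) weighted via $t_k^2-t_k=t_{k-1}^2$, the telescoping of $t_{k-1}^2v_k+\tfrac{1}{2\alpha}\|u_{k-1}\|^2$, and the lower bound $t_{k-1}\geq (k+1)/2$. The only caveat is one of indexing: your final absorption step requires $x_1$ to be produced by a prox step from $x_0$ (as in Beck--Teboulle), whereas the statement here initializes $x_1=x_0$ without such a step, so the bound should be read as applying to the iterates actually generated by \eqref{apg:3}; this is a quirk of the paper's restatement, not a gap in your argument.
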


\subsection{Adaptive APG method}
\label{subsec:AAPG}

The discretized energy functional $E[\Phi]$ in
\eqref{LB:Discrete} can be reformulated as form \eqref{apg:form} by setting
\begin{equation}
f = F[\Phi]\quad\mbox{and} \quad g = G[\Phi].
\end{equation}
We omit the subscript $h$ for simplicity. However, there are two main obstacles when directly applying APG method for solving phase field models as $F$ is non-convex and $\nabla F$ has no universal Lipschitz constant. 
In this paper,  we propose an efficient and convergent numerical
algorithm for solving the discretized phase field model \eqref{LB:Discrete} by combining APG method with restart techniques.

The restart techniques for the APG method was proposed in \cite{o2015adaptive} which has shown significant acceleration of the APG method by imposing the decreasing property of the objective value when solving convex problems. Furthermore, another restart strategy called speed restart is developed in \cite{su2014differential} to ensure the linear convergence of the proposed restart APG method for strongly convex problems. In recent years, the restart techniques have been furtherer applied for solving non-convex composite problems in image processing \cite{bao2016image}. 
We introduce the details of the proposed algorithm in the following context.

Let $\Phi_k$ and $\Phi_{k-1}$ be the current and previous states
respectively and the extrapolation weight $w_k = (t_{k-1}-1)/t_k$. 
We can obtain a candidate state by
\begin{equation}\label{eqn:step1}
\Psi_{k+1} = \Prox_{G}^{\alpha_k}(\tilde{\Phi}_k-\alpha_k\nabla F[\tilde\Phi_k]),
\end{equation}
where 
\begin{equation}
\tilde{\Phi}_k = \Phi_k + w_k(\Phi_k-\Phi_{k-1}).
\end{equation}
It is noted that the proximal mapping in \eqref{eqn:step1} is well defined as $G$ is convex.
Different from the APG method, the restart technique is to determine whether we accept the result $\Psi_{k+1}$ as the new estimate $\Phi_{k+1}$. Inspired by the function value restart condition in \cite{o2015adaptive}, we choose $\Phi_{k+1} = \Psi_{k+1}$ whenever the following condition holds:
\begin{equation}\label{res:fun}
E[\Phi_k]-E[\Psi_{k+1}]\geq \delta\|\Phi_k-\Psi_{k+1}\|^2
\end{equation}
for some $\delta>0$. If the condition \eqref{res:fun} does not
hold, we restart the APG by setting $w_k=0$. In this case, we have 
\begin{equation}\label{eqn:semi_implicit}
\Phi_{k+1} = \Prox_{G}^{\alpha_k}(\Phi_k-\alpha_k\nabla F(\Phi_k)).
\end{equation}
In fact, the scheme \eqref{eqn:semi_implicit} provides an
adaptive time step semi-implicit approach when $\alpha_k$ varies.
From the continuity of $F$, $G$ in
\eqref{LB:Discrete} and the coercive property of $F$, i.e.\
\begin{equation}
F(\Phi)\rightarrow +\infty,\quad\Phi\rightarrow+\infty,
\end{equation}
the
sub-level set $\{E\leq
E[\Phi_0]\}=\{\Phi\in\mathbb{H}~|~E[\Phi]\leq E[\Phi_0]\}$ is
compact for any $\Phi_0$. Let $\mathcal{M}$ be the closed ball
that contains $[E\leq E[\Phi_0]]$. From the smoothness of $F$,
$\nabla F$ is Lipschitz continuous in $\mathcal{M}$. Denote
$L_{\mathcal{M}}$ to be the Lipschitz constant of $\nabla F$ in
the set $\mathcal{M}$, i.e.\
\begin{equation*}\label{Lip:M}
\|\nabla F[\Phi]-\nabla F[\Psi]\|\leq L_{\mathcal{M}}\|\Phi-\Psi\|,~\forall \Phi,\Psi\in\mathcal{M}.
\end{equation*}
Thus, we obtain the the next proposition  that shows
$\Phi_k\in\mathcal{M}$ satisfying the sufficient decrease condition for all $k$.
\begin{proposition}\label{prop:suffDec}
	Given an initial point $\Phi_0$ and the iterates $\Phi_{k+1}
	= \Prox_G^{\alpha_k}(\Phi_k-\alpha_k\nabla F(\Phi_k))$ with
	$\alpha_k\in(0,1/L_{\mathcal{M}})$ for $k=0,1,\ldots$. If
	$\{\Phi_k\}_{k=1}^{\infty}\subset \mathcal{M}$, then 
	\begin{equation}\label{suff_decrease}
	E[\Phi_k] - E[\Phi_{k+1}]\geq (1/2\alpha_k-L_{\mathcal{M}}/2)\|\Phi_{k+1}-\Phi_{k}\|^2,~\forall k = 1,2,\ldots.
	\end{equation}
\end{proposition}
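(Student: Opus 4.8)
The plan is to mirror the argument already used for Theorem~\ref{thm:sis}, since the iterate \eqref{eqn:semi_implicit} defining $\Phi_{k+1}$ is precisely the semi-implicit step \eqref{eq:sis} with step length $\alpha_k$, and the only new ingredient is that the Lipschitz bound must be invoked in its \emph{local} form on $\mathcal{M}$. First I would rewrite the proximal step as a minimization of a quadratic surrogate. By definition of $\Prox_G^{\alpha_k}$ in \eqref{mapping:proximal}, completing the square and discarding terms that are constant in the optimization variable, one gets
\begin{equation*}
\Phi_{k+1}\in\argmin_{\Phi}\ \Big\{F[\Phi_k]+\langle\nabla F[\Phi_k],\Phi-\Phi_k\rangle+\frac{1}{2\alpha_k}\|\Phi-\Phi_k\|^2+G[\Phi]\Big\}=\argmin_\Phi Q_{\alpha_k}(\Phi,\Phi_k),
\end{equation*}
so $\Phi_{k+1}$ is exactly the minimizer of the surrogate $Q_{\alpha_k}(\cdot,\Phi_k)$ introduced in the APG subsection.

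Next I would establish a two-sided sandwich on $Q_{\alpha_k}(\Phi_{k+1},\Phi_k)$. The upper bound is immediate from optimality: evaluating the surrogate at the feasible point $\Phi=\Phi_k$ gives $Q_{\alpha_k}(\Phi_{k+1},\Phi_k)\le Q_{\alpha_k}(\Phi_k,\Phi_k)=F[\Phi_k]+G[\Phi_k]=E[\Phi_k]$. For the lower bound I would use the descent lemma for $F$: since $\nabla F$ is $L_{\mathcal{M}}$-Lipschitz on $\mathcal{M}$ and both $\Phi_k,\Phi_{k+1}\in\mathcal{M}$, one has $F[\Phi_{k+1}]\le F[\Phi_k]+\langle\nabla F[\Phi_k],\Phi_{k+1}-\Phi_k\rangle+\tfrac{L_{\mathcal{M}}}{2}\|\Phi_{k+1}-\Phi_k\|^2$, equivalently $F[\Phi_k]+\langle\nabla F[\Phi_k],\Phi_{k+1}-\Phi_k\rangle\ge F[\Phi_{k+1}]-\tfrac{L_{\mathcal{M}}}{2}\|\Phi_{k+1}-\Phi_k\|^2$. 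Substituting this into the definition of $Q_{\alpha_k}(\Phi_{k+1},\Phi_k)$ and adding $G[\Phi_{k+1}]$ yields
\begin{equation*}
Q_{\alpha_k}(\Phi_{k+1},\Phi_k)\ \ge\ E[\Phi_{k+1}]+\Big(\frac{1}{2\alpha_k}-\frac{L_{\mathcal{M}}}{2}\Big)\|\Phi_{k+1}-\Phi_k\|^2.
\end{equation*}
Chaining this with the upper bound $E[\Phi_k]\ge Q_{\alpha_k}(\Phi_{k+1},\Phi_k)$ gives exactly \eqref{suff_decrease}.

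The subtle point, and the reason the hypothesis $\{\Phi_k\}\subset\mathcal{M}$ is essential, is that $\nabla F$ need not admit a global Lipschitz constant, so the descent lemma is only valid where the local constant $L_{\mathcal{M}}$ controls $\nabla F$. To apply it I need the entire segment $[\Phi_k,\Phi_{k+1}]$ to lie in $\mathcal{M}$, not merely its endpoints; this is guaranteed because $\mathcal{M}$ was chosen as a \emph{closed ball}, hence convex, containing the sublevel set. I would therefore make explicit that the integral form $F[\Phi_{k+1}]-F[\Phi_k]=\int_0^1\langle\nabla F[\Phi_k+s(\Phi_{k+1}-\Phi_k)],\Phi_{k+1}-\Phi_k\rangle\,ds$ together with the Lipschitz estimate along this segment produces the quadratic remainder bound. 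This localization is the only place the proof departs from Theorem~\ref{thm:sis}, and it is where I would be most careful; the remaining algebra is routine.
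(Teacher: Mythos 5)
Your proof is correct and is essentially the paper's argument: the paper proves Proposition~\ref{prop:suffDec} by direct reference to Theorem~\ref{thm:sis}, whose proof is exactly your surrogate sandwich (optimality of $\Phi_{k+1}$ for $Q_{\alpha_k}(\cdot,\Phi_k)$ evaluated at $\Phi_k$, combined with the descent estimate on $F$). Your added remark that the whole segment $[\Phi_k,\Phi_{k+1}]$ must lie in the convex set $\mathcal{M}$ for the local Lipschitz constant $L_{\mathcal{M}}$ to apply is a point the paper leaves implicit, and it is worth making explicit.
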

The proof can be easily obtained from Theorem \ref{thm:sis}. Let $\Phi_{k+1}$ is from \eqref{eqn:semi_implicit} and $\eta\leq\frac{1}{2\alpha_k}-\frac{L_\mathcal{M}}{2}$, the following sufficient condition 
\begin{equation}\label{criterion:line_search}
E[\Phi_k] - E[\Phi_{k+1}]\geq \eta\|\Phi_{k+1}-\Phi_k\|^2
\end{equation}
holds and thus the \eqref{eqn:semi_implicit} is a safe-guard step which ensures energy dissipation. On the other hand, by Proposition \ref{prop:suffDec}, $\alpha_k$ should be less than $1/L_{\mathcal{M}}$ which might be very small. Thus, it only allows a small step size which may significantly slow down the convergence, and be always too conservative. By line search technique, we can adaptively estimate the step size $\alpha_k$ which will be introduced as follows.

{\bf \noindent Estimation of step size $\alpha_k$.}
Define: $s_{k-1}:=\Phi_k-\Phi_{k-1}$, and $g_{k-1}:=\nabla
F[\Phi_k]-\nabla F[\Phi_{k-1}]$. We initialize the search step by
the Barzilai-Borwein (BB) method \cite{barzilai1988two}, i.e.\
\begin{equation}\label{BB_init}
\beta_0 = \frac{\langle s_{k-1}, s_{k-1}\rangle}{\langle s_{k-1}, g_{k-1}\rangle}\quad\mbox{or}\quad
\beta_0 = \frac{\langle s_{k-1}, g_{k-1}\rangle}{\langle g_{k-1}, g_{k-1}\rangle}.
\end{equation}
Together with the standard backtracking, we adopt the step size $\alpha_k$ whenever \eqref{criterion:line_search} holds.
The detailed algorithm of estimation the step size $\alpha_k$ is given in Algorithm \ref{alg:eststep}.
\begin{algorithm}[!pbht]
	\caption{Estimation of $\alpha_k$ at $\Psi_k$}
	\label{alg:eststep}
	\begin{algorithmic}[1]
		\State {\bf Inputs:} $\Phi_k$, $\Psi_k$, $\nabla
		F[\Psi_k]$, $\nabla F[\Phi_k]$, $\rho\in(0,1)$ and $\eta>0$ 
		\State {\bf Output:} step size $\alpha_k$
		\State Set $ s_k =  \Psi_k- \Phi_k$ and $d_k = \nabla
		F[\Psi_k]-\nabla F[\Phi_k]$.
		\State Initialize $\beta$ by the Barzilai-Borwein method
		via Eqn.\eqref{BB_init}
		\For{$j=1,2\ldots$}
		\State Calculate $ \Psi_{k+1} = \Prox_{G}^{\beta}( \Psi_k - \beta\nabla F[\Psi_k])$
		\State Step size length $\beta$ is obtained by the linear search technique
		\If{$E[\Psi_k]-E[\Psi_{k+1}]\geq\eta\|\Psi_k-\Psi_{k+1}\|^2$}
		\State $\alpha_k=\beta$ and {\bf break}
		\Else
		\State $\beta = \rho\beta$
		\EndIf 
		\EndFor
	\end{algorithmic}
\end{algorithm}
and the proposed adaptive APG algorithm is present in Algorithm \ref{alg:adaAPG}.
\begin{algorithm}[!pbht]
	\caption{Adaptive APG algorithm for PFC model}
	\label{alg:adaAPG}
	\begin{algorithmic}[1]
		\State Initialize $\Phi_1=\Phi_0$, $w_0\in [0,1]$, $ N_{max}\in\mathbb{N}$, $k_{ada}=0$, $\eta\geq\delta>0$.
		\For{$k=1,2,3,\ldots,$}
		\State Update $w_k\in [0,1]$
		\State Update $\Psi_k = (1+w_k)\Phi_k - w_k\Phi_{k-1}$
		\State Estimate the step size $\alpha_k$ at $\Psi_k$ via Algorithm~\ref{alg:eststep}
		\State Calculate $\Psi_{k+1} =
		\Prox_{G}^{\alpha_k}(\Psi_k-\alpha_k\nabla F[\Psi_k])$.
		\If{$E[\Phi_k]-E[\Psi_{k+1}]\geq \delta
		\|\Phi_k-\Psi_{k+1}\|^2$ holds and $k-k_{ada}\leq N_{max}$}
		\State Set $\Phi_{k+1} = \Psi_{k+1}$.
		\Else
		\State Reset $w_k=0$ and $k_{ada} = k$.
		\EndIf
		\EndFor
	\end{algorithmic}
\end{algorithm}

\subsubsection{Convergence analysis}
\label{subsec:convergence}

In this section, we show that our proposed method converges to a steady state of the original energy function. Firstly, we present a useful lemma for our analysis.
\begin{lemma}[Uniformized Kurdyka-Lojasiewicz property
	\cite{bolte2014proximal}.] Let $\Omega$ be a compact set and
	$E$ defined in \eqref{LB:Discrete} be bounded below. Assume that $E$ is constant on $\Omega$. Then, there exist $\epsilon>0$, $\eta>0$, and $\psi\in\Psi_\eta$ such that for all $\bar u\in\Omega$ and all $u\in\Gamma_\eta(\bar u,\epsilon)$, 
	one has,
	\begin{equation}\label{UKL}
	\psi^{'}(E(u)-E(\bar u))\|\nabla E(u)\|\geq 1,
	\end{equation}
	where $\Psi_\eta =\{\psi\in C[0,\eta)\cap C^1(0,\eta) \text{ and } \psi \text{ is concave}, \psi(0)=0, \psi^{'}>0 \text{ on } (0,\eta)\}$ and $\Gamma_\eta(x,\epsilon) = \{y|\|x-y\|\leq \epsilon, E(x)<E(y)<E(x)+\eta\}$.
	\label{lemma:ukl}
\end{lemma}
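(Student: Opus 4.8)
The statement is the uniformized Kurdyka--\L ojasiewicz lemma of \cite{bolte2014proximal}, specialized to the smooth energy $E$; my plan is to prove it in two stages: first establish that $E$ enjoys the \emph{pointwise} KL property at every point of $\Omega$, and then promote this local property to the uniform estimate over $\Omega$ by a finite-covering argument.

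\emph{Stage one (pointwise KL).} The discretized energy $E$ in \eqref{LB:Discrete} is, after separating the real and imaginary parts of the Fourier coefficients, a degree-four real polynomial in $2N$ real variables. Polynomials are real-analytic, hence by the \L ojasiewicz gradient inequality they satisfy the KL property at every point: for each $\bar u\in\Omega$ there exist $\epsilon_{\bar u}>0$, $\eta_{\bar u}>0$ and a desingularizing function $\psi_{\bar u}\in\Psi_{\eta_{\bar u}}$ (one may take $\psi_{\bar u}(s)=c_{\bar u}\,s^{1-\theta_{\bar u}}$ with \L ojasiewicz exponent $\theta_{\bar u}\in[0,1)$) such that
\begin{equation}
\psi_{\bar u}'\big(E(u)-E(\bar u)\big)\,\|\nabla E(u)\|\geq 1
\end{equation}
for every $u$ with $\|u-\bar u\|\le\epsilon_{\bar u}$ and $E(\bar u)<E(u)<E(\bar u)+\eta_{\bar u}$. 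Because $E$ is smooth here, $\|\nabla E(u)\|$ legitimately replaces the generic $\mathrm{dist}(0,\partial E(u))$ appearing in the subdifferential form of the KL inequality.

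\emph{Stage two (uniformization).} Since $E$ is constant on $\Omega$, write $E\equiv E^{*}$ there. The open balls $\{B(\bar u,\epsilon_{\bar u})\}_{\bar u\in\Omega}$ cover the compact set $\Omega$, so I extract a finite subcover $B(u_1,\epsilon_1),\dots,B(u_p,\epsilon_p)$ with $u_i\in\Omega$, whence $E(u_i)=E^{*}$. As $\Omega$ is compact and the union $\bigcup_i B(u_i,\epsilon_i)$ is open and contains it, the complement of the union has positive distance to $\Omega$, so there is $\epsilon>0$ with $\{u:\mathrm{dist}(u,\Omega)\le\epsilon\}\subset\bigcup_i B(u_i,\epsilon_i)$. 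Set $\eta=\min_i\eta_i$ and define $\psi$ by $\psi(0)=0$ and
\begin{equation}
\psi'(s)=\max_{1\le i\le p}\psi_i'(s),\qquad s\in(0,\eta),
\end{equation}
that is $\psi(s)=\int_0^s\max_i\psi_i'(t)\,dt$. Now take any $\bar u\in\Omega$ and any $u\in\Gamma_\eta(\bar u,\epsilon)$. Then $\|u-\bar u\|\le\epsilon$ gives $\mathrm{dist}(u,\Omega)\le\epsilon$, so $u\in B(u_i,\epsilon_i)$ for some $i$; moreover $E(u_i)=E^{*}=E(\bar u)$ and $E^{*}<E(u)<E^{*}+\eta\le E^{*}+\eta_i$, so the pointwise inequality centered at $u_i$ applies and yields $\psi_i'(E(u)-E^{*})\|\nabla E(u)\|\ge1$. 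Since $\psi'(E(u)-E(\bar u))=\max_j\psi_j'(E(u)-E^{*})\ge\psi_i'(E(u)-E^{*})$, the uniform inequality \eqref{UKL} follows.

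\emph{Main obstacle.} The one nontrivial point is producing a \emph{single} function $\psi\in\Psi_\eta$ valid for all centers simultaneously, since the naive choice $\max_i\psi_i$ need not be concave. The remedy is to take the maximum at the level of derivatives, $\psi'=\max_i\psi_i'$: this is positive and continuous as a maximum of finitely many positive continuous functions, and, crucially, non-increasing because each $\psi_i'$ is non-increasing (the $\psi_i$ being concave), so the resulting $\psi$ is genuinely concave and lies in $\Psi_\eta$. The remaining work is bookkeeping---fixing $\epsilon$ as a uniform neighborhood radius via compactness, and exploiting $E(\bar u)=E(u_i)=E^{*}$ so that the level-strip condition for the arbitrary base point $\bar u$ transfers to the strip condition for the covering center $u_i$.
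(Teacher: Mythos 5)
Your proof is correct and complete. For comparison: the paper does not actually prove this lemma --- its stated proof is a one-sentence appeal to the fact that $F$ and $G$ satisfy the Kurdyka--\L{}ojasiewicz property, together with the citation to \cite{bolte2014proximal}. What you have written is, in substance, the proof of the cited result (the uniformized KL lemma of Bolte--Sabach--Teboulle) specialized to this setting: the pointwise KL property comes for free because the discretized energy is a real polynomial in the $2N$ real coordinates, hence real-analytic and subject to the \L{}ojasiewicz gradient inequality, and the uniformization is the standard finite-subcover argument over the compact set on which $E$ is constant, with the Lebesgue-number step giving a single radius $\epsilon$. The only place you deviate from the reference is in assembling the single desingularizing function: there one takes $\psi=\sum_i\psi_i$, whereas you take $\psi'=\max_i\psi_i'$ and integrate; both dominate each $\psi_i'$ and preserve concavity (a sum of concave functions is concave; a pointwise maximum of non-increasing, positive, integrable derivatives is non-increasing, positive, and integrable near $0$), so both are valid, and your remark that the maximum must be taken at the level of derivatives rather than of the functions themselves is exactly the right subtlety to isolate. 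Your version buys a self-contained justification that the paper omits; the paper's version buys brevity at the cost of leaving the reader to verify that the polynomial energy $E$ is indeed a KL function, which is precisely your Stage one.
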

\begin{proof}
	The proof is based on the facts that $F$ and $G$ satisfy the so called Kurdyka-Lojasiewicz property on $\Omega$ \cite{bolte2014proximal}.
\end{proof}

\begin{thm}\label{thm:convergence}
	Let $E$ defined in \eqref{LB:Discrete} be bounded below and
	$\{\Phi_k\}$ be the sequence generated by
	Algorithm~\ref{alg:adaAPG}. If $\Phi_k\in \mathcal{M}$ and
	$\liminf_k \alpha_k=\bar\alpha>0$, then $\{\Phi_k\}$ has the 
	global convergence property, i.e.\ there exists a point
	$\Phi^*$ such that $\nabla E[\Phi^*]=\vzero$ and
	$\lim\limits_{k\rightarrow+\infty} \Phi_k=\Phi^*$.
\end{thm}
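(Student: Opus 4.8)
The plan is to follow the now-standard Kurdyka--Lojasiewicz (KL) framework for nonconvex descent methods (as in \cite{bolte2014proximal}), which requires verifying three ingredients for the sequence $\{\Phi_k\}$: a \emph{sufficient decrease} property, a \emph{relative error} (subgradient) bound, and a \emph{continuity} condition, and then invoking the uniformized KL property of Lemma~\ref{lemma:ukl}. The starting observation is that, after discretization, both $F$ and $G$ are polynomials in $\Phi$, so $E=F+G$ is smooth and its subdifferential reduces to $\{\nabla E\}$; in particular the stationarity statement $\nabla E[\Phi^*]=\vzero$ is the correct one and the proximal step admits a clean optimality condition. Throughout I would use that $\{\Phi_k\}\subset\mathcal{M}$ guarantees both boundedness of the iterates and Lipschitz continuity of $\nabla F$ with constant $L_{\mathcal{M}}$, and that $\liminf_k\alpha_k=\bar\alpha>0$ keeps $1/\alpha_k$ uniformly bounded.

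\emph{Sufficient decrease.} First I would show that, whichever branch of Algorithm~\ref{alg:adaAPG} is taken at step $k$, the accepted iterate satisfies
\begin{equation*}
E[\Phi_k]-E[\Phi_{k+1}]\geq \delta\,\|\Phi_{k+1}-\Phi_k\|^2 .
\end{equation*}
In the acceptance branch this is exactly the restart test \eqref{res:fun}; in the restart branch one has $w_k=0$, so $\Phi_{k+1}$ is the safe-guarded semi-implicit step \eqref{eqn:semi_implicit}, and Proposition~\ref{prop:suffDec} together with the line-search criterion \eqref{criterion:line_search} (recall $\eta\geq\delta$) gives the same bound. Consequently $\{E[\Phi_k]\}$ is nonincreasing; being bounded below it converges to some $E^*$, and summing the inequalities yields $\sum_k\|\Phi_{k+1}-\Phi_k\|^2<\infty$, whence $\|\Phi_{k+1}-\Phi_k\|\to0$.

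\emph{Relative error bound.} Next I would differentiate the proximal subproblem \eqref{eqn:step1}. Since $G$ is smooth, its optimality condition reads $\nabla G[\Psi_{k+1}]+\tfrac{1}{\alpha_k}(\Psi_{k+1}-\Psi_k)+\nabla F[\Psi_k]=\vzero$, so that
\begin{equation*}
\nabla E[\Phi_{k+1}] = \big(\nabla F[\Phi_{k+1}]-\nabla F[\Psi_k]\big)-\tfrac{1}{\alpha_k}(\Psi_{k+1}-\Psi_k),
\end{equation*}
using $\Phi_{k+1}=\Psi_{k+1}$. Bounding the first term by $L_{\mathcal{M}}$-Lipschitzness, substituting $\Psi_k=\Phi_k+w_k(\Phi_k-\Phi_{k-1})$ with $w_k\in[0,1]$, and using $1/\alpha_k\leq 1/\bar\alpha$, I obtain a constant $b>0$ with
\begin{equation*}
\|\nabla E[\Phi_{k+1}]\|\leq b\big(\|\Phi_{k+1}-\Phi_k\|+\|\Phi_k-\Phi_{k-1}\|\big).
\end{equation*}
Combined with $\|\Phi_{k+1}-\Phi_k\|\to0$ this shows that every limit point $\bar\Phi$ of the bounded sequence is stationary, $\nabla E[\bar\Phi]=\vzero$, and by continuity $E\equiv E^*$ on the (nonempty, compact, connected) limit-point set $\omega(\Phi_0)$.

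\emph{KL argument and conclusion.} Finally I would apply Lemma~\ref{lemma:ukl} on $\omega(\Phi_0)$: for large $k$ the iterates lie in the KL neighborhood, so $\psi'\!\big(E[\Phi_k]-E^*\big)\|\nabla E[\Phi_k]\|\geq 1$. Writing $r_k:=E[\Phi_k]-E^*$ and $d_k:=\|\Phi_{k+1}-\Phi_k\|$, concavity of $\psi$ gives $\psi(r_k)-\psi(r_{k+1})\geq \psi'(r_k)(r_k-r_{k+1})\geq \delta\,\psi'(r_k)\,d_k^2$, and combining with the two previous estimates leads to the recursion
\begin{equation*}
d_k^2 \leq \frac{b}{\delta}\,\big(\psi(r_k)-\psi(r_{k+1})\big)\,(d_{k-1}+d_{k-2}).
\end{equation*}
Applying $2\sqrt{uv}\leq u+v$ turns this into $d_k\leq \tfrac{b}{\delta}\big(\psi(r_k)-\psi(r_{k+1})\big)+\tfrac14(d_{k-1}+d_{k-2})$; summing over $k$, the $\psi$-terms telescope and the shifted $d$-sums are absorbed into the left-hand side, giving $\sum_k d_k<\infty$. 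Hence $\{\Phi_k\}$ is Cauchy and converges to a single limit $\Phi^*$, which by the relative error bound satisfies $\nabla E[\Phi^*]=\vzero$. The main obstacle is this last step: because of the extrapolation the relative error is controlled by \emph{two} consecutive increments $d_{k-1}+d_{k-2}$ rather than one, so the plain telescoping of \cite{bolte2014proximal} must be replaced by the absorption argument above; checking that $\liminf_k\alpha_k>0$ and $w_k\in[0,1]$ suffice to keep $b$ finite, and that the restart branch does not spoil the uniform descent constant $\delta$, are the points that require the most care.
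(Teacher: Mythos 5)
Your proposal is correct and follows essentially the same route as the paper's proof: sufficient decrease from the restart test \eqref{res:fun} and the safeguarded semi-implicit step, a gradient bound $\|\nabla E[\Phi_{k+1}]\|\lesssim \|\Phi_{k+1}-\Phi_k\|+\|\Phi_k-\Phi_{k-1}\|$ from the proximal optimality condition, and the uniformized KL lemma to obtain finite length and hence convergence to a single stationary point. The only divergence is cosmetic: in the final absorption you use a weighted AM--GM to place a factor $1/4$ on each shifted sum, whereas the paper keeps the symmetric inequality and instead relies on $w_k\leq\bar w<1$ (guaranteed by the $N_{max}$ reset) to absorb the shifted terms; both close the argument.
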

\begin{proof}
	Define 
	\begin{equation}\label{iter	x_update}
	P_{k+1} = \Prox_G^{\alpha_k}(\Phi_k-\alpha_k\nabla F[\Phi_k])
	\end{equation}
	and  two sets $\Omega_2 = \{k\,|\,t_k = 1\}$ and $\Omega_1=\mathbb N\backslash\Omega_2$. It is noted that for any $k\in\Omega_2$, we have $\Phi_{k+1}=P_{k+1}$.
	Let $w_k=(t_{k}-1)/t_{k+1}$, then there exists some $\bar w=
	(t_{N_{max}}-1)/(t_{N_{max}}+1) \in[0,1)$ such that $w_k\leq \bar w$ for all $k$ as $t_k$ is increasing and $t_k$ is reset to $1$ at most every $N_{max}$ iteration. We show the following properties of the sequence $\{x_k\}$ generated by Algorithm~\ref{alg:adaAPG}.\\
	{\bf Sufficient decrease property.} If $k\in\Omega_2$, we have 
	\begin{equation}
	E[\Phi_k] - E[\Phi_{k+1}] \geq\max( 1/2\alpha_k-L_{\mathcal{M}}/2,\eta)\|\Phi_k-\Phi_{k+1}\|^2
	\end{equation}
	from Proposition \ref{prop:suffDec} and the line search criterion \eqref{criterion:line_search}.
	Together with the condition \eqref{res:fun},  the following \emph{sufficient decrease property} holds	
	\begin{equation}\label{suffDec}
	E[\Phi_k] - E[\Phi_{k+1}] \geq \rho_1\|\Phi_k-\Phi_{k+1}\|^2,~\forall k,
	\end{equation}
	where $\rho_1 = \min\{\eta,\delta\}>0$. 
	Since $\inf E>-\infty$, there exists $E^{*}$ such that
	$E[\Phi_k]\geq E^{*}$ and $E[\Phi_k]\rightarrow E^{*}$ as $k\rightarrow+\infty$. This implies
	\begin{equation}\label{incre_x}
	\rho_1\sum_{k=0}^\infty \|\Phi_{k+1}-\Phi_k\|^2\leq
	E[\Phi_0] - E^{*}<+\infty \text{ and } \lim\limits_{k\rightarrow+\infty}\|\Phi_{k+1}-\Phi_k\| = 0.
	\end{equation}
	{\bf Bounded the gradient.} If $k\in\Omega_1$, by the optimality condition of \eqref{eqn:step1}, we have 
	\begin{equation*}
	-\nabla F[\tilde{\Phi}_k] + \frac{1}{\alpha_k}
	(\tilde{\Phi}_k-\Phi_{k+1})=\nabla G[\Phi_{k+1}].
	\end{equation*}
	Thus, $ \nabla F[\Phi_{k+1}]-\nabla F[\tilde{\Phi}_k] +
	\dfrac{1}{\alpha_k} (\tilde{\Phi}_k-\Phi_{k+1})=\nabla E[\Phi_{k+1}]$ and 
	\begin{equation}\label{subBound_1}
	\begin{aligned}
	\|\nabla E[\Phi_{k+1}]\|&\leq (L_{\mathcal{M}}+1/\bar{\alpha})\|\Phi_{k+1}-\tilde{\Phi}_k\|\\
	&\leq(L_{\mathcal{M}}+1/\bar{\alpha})(\|\Phi_{k+1}-\Phi_k\|+w_k\|\Phi_k-\Phi_{k-1}\|),
	\end{aligned}
	\end{equation}
	as $\tilde{\Phi}_k\in\mathcal{M}$ where $\mathcal{M}$ is a bounded set and $L_{\mathcal{M}}$ is the Lipschitz constant of $\nabla F$ in $\mathcal{M}$.
	If $k\in\Omega_2$, by the optimality condition of \eqref{iter	x_update}, we have
	\[
	-\nabla F[\Phi_k] + \frac{1}{\alpha_k}(\Phi_k-\Phi_{k+1})= \nabla G[\Phi_{k+1}].
	\]
	Then, $ \nabla F[\Phi_{k+1}]-\nabla F[\Phi_k] +
	\dfrac{1}{\alpha_k}(\Phi_k-\Phi_{k+1})= \nabla E[\Phi_{k+1}]$, then
	\begin{equation}\label{subBound_2}
	\|\nabla E[\Phi_{k+1}]\|\leq (L_{\mathcal{M}}+1/\bar{\alpha})\|\Phi_{k+1}-\Phi_k\|.
	\end{equation}
	Combining \eqref{subBound_1} with \eqref{subBound_2}, it follows that 
	\begin{equation}\label{subBound}
	\|\nabla E[\Phi_{k+1}]\|\leq \rho_2 (\|\Phi_{k+1}-\Phi_k\|+w_k\|\Phi_k-\Phi_{k-1}\|)\leq\rho_2 (\|\Phi_{k+1}-\Phi_k\|+\bar w\|\Phi_k-\Phi_{k-1}\|),
	\end{equation}
	where
$\rho_2=L_{\mathcal{M}}+1/\bar{\alpha}>0$.\\
	{\bf Subsequence convergence.} Since
	$\{\Phi_k\}\subset\mathcal{M}$ which is compact, there exists a subsequence $\{\Phi_{k_j}\}$ and $\Phi^{*}\in\mathcal{M}$ such that 
	\begin{equation}
	\lim_{j\rightarrow+\infty}\Phi_{k_j} = \Phi^{*},\quad
	\lim_{j\rightarrow+\infty} E[\Phi_{k_j}] = E[\Phi^{*}]
	\quad\mbox{and}\quad \lim_{j\rightarrow+\infty} \nabla
	E[\Phi_{k_j}] = \nabla E[\Phi^{*}],
	\end{equation}
	where the last two equalities are from the continuity of $E$. Moreover, \eqref{incre_x} implies
	\begin{equation}
	\lim_{j\rightarrow+\infty} \|\Phi_{k_j}-\Phi_{k_j-1}\| = 0
	\quad\mbox{and}\quad \lim_{j\rightarrow+\infty}
	\|\Phi_{k_j-1}-\Phi_{k_j-2}\| = 0.
	\end{equation}
	Then, we know $\nabla E[\Phi^{*}]=\vzero$ from \eqref{subBound}.\\
	{\bf Finite length property.} Let $\omega(\Phi_0)$ be the set
	of limiting points of the sequence $\{\Phi_k\}$ starting from
	$\Phi_0$. By the boundedness of $\{\Phi_k\}$ and the fact
	$\omega(\Phi_0)=\cap_{q\in\mathbb{N}}\overline{\cup_{k\geq
	q}\{\Phi_k\}}$, it follows that $\omega(\Phi_0)$ is a
	non-empty and compact set. Moreover, from \eqref{suffDec}, we
	know $E[\Phi]$ is constant on $\omega(\Phi_0)$, denoted by
	$E^*$. If there exists some $k_0$ such that
	$E[\Phi_{k_0}]=E^*$, then we have $E[\Phi_k]=E^*$ for all
	$k\geq k_0$ which is from \eqref{suffDec}. In the following
	proof, we assume that $E[\Phi_k]>E^*$ for all $k$. Therefore,
	$\forall \epsilon,\eta>0$, there exists some $\ell>0$ such
	that for all $k>\ell$, we have
	$\mathrm{dist}(\omega(\Phi_0),\Phi_k)\leq \epsilon \text{ and
	} E^*<E[\Phi_k]<E^*+\eta$, i.e. 
	\begin{equation}\label{APP:EQ}
	\Phi\in\Gamma_{\eta}(\Phi^*,\epsilon) \quad \text{for all }\quad\Phi^*\in w(\Phi_0).
	\end{equation}
	Applying lemma \ref{lemma:ukl}, for all $k>\ell$ we have
	\begin{equation*}
	\psi^{'}(E[\Phi_k]-E^*)\|\nabla E[\Phi_k]\|\geq 1.
	\end{equation*}
	Form \eqref{subBound}, it implies
	\begin{equation}\label{KL1}
	\psi^{'}(E[\Phi_k]-E^*)\geq
	\frac{1}{\rho_2(\|\Phi_k-\Phi_{k-1}\|+w_{k-1}\|\Phi_{k-1}-\Phi_{k-2}\|)}.
	\end{equation}
	By the convexity of $\psi$, we have
	\begin{equation}\label{concave}
	\psi(E[\Phi_k]-E^*) - \psi(E[\Phi_{k+1}] -E^*)
	\geq \psi^{'}(E[\Phi_k]-E^*)(E[\Phi_k]-E[\Phi_{k+1}]).
	\end{equation}
	Define $\Delta_{p,q}=\psi(E[\Phi_p]-E^*) - \psi(E[\Phi_q] -E^*)$ and $C=\rho_2/\rho_1>0$. Together with \eqref{KL1}, \eqref{concave} and \eqref{suffDec}, we have for all $k>\ell$
	\begin{equation}
	\Delta_{k,k+1}\geq \frac{\|\Phi_{k+1}-\Phi_k\|_2^2}{C(\|\Phi_k-\Phi_{k-1}\|+w_{k-1}\|\Phi_{k-1}-\Phi_{k-2}\|)},
	\end{equation}
	and therefore,
	\begin{equation}\label{GeoIneq}
	2\|\Phi_{k+1}-\Phi_{k}\|\leq \|\Phi_k-\Phi_{k-1}\|+w_{k-1}\|\Phi_{k-1}-\Phi_{k-2}\| + C\Delta_{k,k+1},
	\end{equation}
	which is from the fact that geometric inequality. For any $k>\ell$, summing up \eqref{GeoIneq} for $i=\ell+1,\ldots,k$, it implies
	\begin{equation*}
	\begin{split}
	& 2\sum_{i=\ell+1}^k \|\Phi_{i+1}-\Phi_{i}\|\leq \sum_{i=\ell+1}^k (\|\Phi_i-\Phi_{i-1}\|+w_{i-1}\|\Phi_{i-1}-\Phi_{i-2}\|)+C\sum_{i=\ell+1}^k\Delta_{i,i+1}\\
	\leq & \sum_{i=\ell+1}^k (1+w_i)\|\Phi_{i+1}-\Phi_i\| + (1+w_\ell)\|\Phi_\ell-\Phi_{\ell-1}\|+w_{\ell-1}\|\Phi_{\ell-1}-\Phi_{\ell-2}\|+C\Delta_{\ell+1,k+1},
	\end{split}
	\end{equation*}
	where the last inequality is from the fact that $\Delta_{p,q}+\Delta_{q,r}=\Delta_{p,r}$ for all $p,q,r\in\mathbb{N}$. Since $\psi\geq 0$, for any $k>\ell$ and $w_k\leq\bar w$, we have
	\begin{equation}\label{bound	GlobalConv}
	\begin{split}
	\sum_{i=\ell+1}^k(1-\bar w)\|& \Phi_{i+1} -\Phi_i\| \leq\sum_{i=\ell+1}^k(1-w_i)\|\Phi_{i+1}-\Phi_i\|\\
	& \leq
	(1+w_\ell)\|\Phi_\ell-\Phi_{\ell-1}\|+w_{\ell-1}\|\Phi_{\ell-1}-\Phi_{\ell-2}\|+C\psi(E[\Phi_\ell]-E^*).
	\end{split}
	\end{equation}
	This easily implies that
	$\sum_{k=1}^{\infty}\|\Phi_{k+1}-\Phi_k\|<\infty$ and
	$\lim\limits_{k\rightarrow+\infty}\Phi_k=\Phi^*$ where
	$\nabla E[\Phi^{*}]=\vzero$.
\end{proof}

\section{Connection with gradient flows}
\label{sec:connection}
Let $\mathcal{L}$ be a non-positive symmetric operator, the gradient flow of energy $E$ can be formulated as
\begin{equation}\label{op:gradFlow}
\frac{\partial \phi}{\partial t} = \mathcal{L}\frac{\delta E}{\delta \phi}. 
\end{equation}
Two classical gradient flow approaches for solving the PFC model are
\begin{subequations}
\begin{align}
\text{(Allen-Cahn)}\quad&\frac{\partial \phi}{\partial t}=-\frac{\delta E}{\delta \phi}, \label{ac:model}\\
\text{(Cahn-Hilliard)}\quad&\frac{\partial \phi}{\partial t}=\nabla\cdot\left(M_{\phi}\nabla\frac{\delta E}{\delta\phi}\right),\label{ch:model}
\end{align}
\end{subequations}
with appropriate boundary conditions where $M_\phi$ is a
non-positive symmetric operator dependent on $\phi$.
Again, splitting $E[\phi]$ into $E[\phi]=F[\phi]+G[\phi]$, for a given
spacial discretization, the discretized energy can be formulated as 
\begin{equation}
E_h[\Phi] = F_h[\Phi]+G_h[\Phi].
\end{equation}
Typical first-order numerical approaches for solving \eqref{op:gradFlow} include explicit, semi-implicit and implicit schemes, i.e.\
\begin{equation}\label{schemes_numeric}
\frac{\Phi_{k+1}-\Phi_k}{ \alpha } = \mathcal{L}_h\begin{cases}
\nabla F_h[\Phi_k] + \nabla G_h[\Phi_k], & \mbox{(Explicit)},\\
\nabla F_h[\Phi_k] + \nabla G_h[\Phi_{k+1}],& \mbox{(Semi-implicit)},\\
\nabla F_h[\Phi_{k+1}] + \nabla G_h[\Phi_{k+1}],& \mbox{(Implicit)},
\end{cases}
\end{equation}
where $\mathcal{L}_h$ denotes the discretization of $\mathcal{L}$. 
To build up the connection with \eqref{schemes_numeric}, we define the generalized proximal operator.
\begin{defy}[Generalized proximal operator] Let $G$ be a proper, lower semi-continuous function and $\mathcal{S}$ be a positive symmetric operator. The generalized proximal operator with respect to $\mathcal{S}$ is
	\begin{equation}\label{prox:general}
	\GProx_{G,\mathcal{S}} (y) = \argmin_x \left\{G(x)+\frac{1}{2}\|x-y\|_{\mathcal{S}}^2\right\},
	\end{equation}
	where $\|x\|_{\mathcal{S}}^2=\langle x,\mathcal{S}x\rangle$.
\end{defy}
It is noted $\GProx_{G,\mathcal{S}}$ is non-empty and compact, see \cite{bolte2014proximal}. The connection between the generailized proximal operator and scheme \eqref{schemes_numeric} arrives the following proposition:
\begin{proposition}\label{prop:connection}
	If $\mathcal{L}_h$ is invertible. The schemes in \eqref{schemes_numeric} are equivalent to
	\begin{equation}\label{connection}
	\Phi_{k+1}=\begin{cases}
	\GProx_{\vzero,\mathcal{I}}\left(\Phi_k+ \alpha \mathcal{L}_h\left(\nabla F_h[\Phi_k] + \nabla
	G_h[\Phi_k]\right)\right), & \mbox{\rm(Explicit scheme)},    \\ 
	\GProx_{ \alpha  G_h,-\mathcal{L}_h^{-1}}\left(\Phi_k + \alpha \mathcal{L}_h\nabla F_h[\Phi_k]\right), &
	\mbox{\rm(Semi-implicit scheme)},
	\\
	\GProx_{\alpha (F_h+G_h),-\mathcal{L}_h^{-1}}\left(\Phi_k\right), &
	\mbox{\rm(Implicit scheme)},
	\end{cases}
	\end{equation}
	where $\mathcal{I}$ is the identity operator.
\end{proposition}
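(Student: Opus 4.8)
The plan is to verify each of the three cases by writing down the first-order stationarity (Euler--Lagrange) condition of the minimization problem defining the generalized proximal operator in \eqref{prox:general}, and matching it term by term with the corresponding line of \eqref{schemes_numeric}. For a symmetric operator $\mathcal{S}$ one has $\nabla_x\big[\tfrac12\|x-y\|_{\mathcal{S}}^2\big]=\mathcal{S}(x-y)$, so a minimizer $x^\star=\GProx_{G,\mathcal{S}}(y)$ satisfies $\vzero\in\partial G(x^\star)+\mathcal{S}(x^\star-y)$, which in the smooth case reduces to $\nabla G(x^\star)+\mathcal{S}(x^\star-y)=\vzero$. This identity is the single computational engine behind all three equivalences.

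Before applying it, I would record why the operator $-\mathcal{L}_h^{-1}$ appearing in the semi-implicit and implicit cases is admissible in \eqref{prox:general}: since $\mathcal{L}$ is non-positive and symmetric, its discretization $\mathcal{L}_h$ is negative semi-definite, and the invertibility hypothesis upgrades this to negative definite; hence $-\mathcal{L}_h^{-1}$ is a genuine positive symmetric operator and $\|\cdot\|_{-\mathcal{L}_h^{-1}}$ is a bona fide norm, so $\GProx_{\,\cdot\,,-\mathcal{L}_h^{-1}}$ is well defined.

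The three cases then proceed as follows. For the explicit scheme, $\GProx_{\vzero,\mathcal{I}}(y)=\argmin_x\tfrac12\|x-y\|^2=y$, so substituting $y=\Phi_k+\alpha\mathcal{L}_h(\nabla F_h[\Phi_k]+\nabla G_h[\Phi_k])$ reproduces the explicit line of \eqref{schemes_numeric} at once. For the semi-implicit scheme, I set $y=\Phi_k+\alpha\mathcal{L}_h\nabla F_h[\Phi_k]$ and write the stationarity condition of $\argmin_x\{\alpha G_h(x)+\tfrac12\|x-y\|_{-\mathcal{L}_h^{-1}}^2\}$, namely $\alpha\nabla G_h[\Phi_{k+1}]-\mathcal{L}_h^{-1}(\Phi_{k+1}-y)=\vzero$; multiplying on the left by $\mathcal{L}_h$ and reinserting $y$ yields exactly $(\Phi_{k+1}-\Phi_k)/\alpha=\mathcal{L}_h(\nabla F_h[\Phi_k]+\nabla G_h[\Phi_{k+1}])$. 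The implicit scheme is identical with $G_h$ replaced by $F_h+G_h$ and $y=\Phi_k$. In each direction the step of multiplying by $\mathcal{L}_h$ is precisely where invertibility is used, and it is reversible, so the scheme update and the proximal update coincide.

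The one point that needs care --- and the main obstacle --- is the status of these identities as \emph{equalities of points} rather than merely of necessary conditions. For the explicit and semi-implicit cases this is automatic: $G_h$ is convex and $\|\cdot\|_{-\mathcal{L}_h^{-1}}^2$ is strictly convex, so the objective is strictly convex and its unique minimizer is characterized exactly by the stationarity condition, giving a true equivalence. For the implicit case the summand $F_h$ is non-convex, so the objective of $\GProx_{\alpha(F_h+G_h),-\mathcal{L}_h^{-1}}$ need not be convex; there the equivalence should be read at the level of critical points (the scheme encodes precisely the stationarity condition that any point returned by the $\argmin$ must satisfy), or else restored globally by taking $\alpha$ small enough that $\alpha\nabla^2 F_h-\mathcal{L}_h^{-1}\succ0$ on $\mathcal{M}$, which convexifies the objective. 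I would state this caveat explicitly so that the implicit equivalence is not overclaimed.
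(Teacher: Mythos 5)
Your proof is correct and follows essentially the same route as the paper: write the first-order optimality condition of the minimization defining $\GProx_{\,\cdot\,,-\mathcal{L}_h^{-1}}$, multiply by $\mathcal{L}_h$, and match it with the corresponding line of \eqref{schemes_numeric} (the paper carries this out only for the semi-implicit case and declares the others similar). Your added caveat on the implicit case --- that $F_h+G_h$ is non-convex, so the equivalence there is one of critical points unless $\alpha$ is small enough to convexify the objective --- is a point the paper does not address and is a genuine improvement in rigor rather than a different method.
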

\begin{proof}
	As the proof of three schemes are similar, we only prove the semi-implicit case.
	It is noted that $-\mathcal{L}_h$ is positive symmetric as $\mathcal{L}_h$ is non-negative and invertible. Since $\Phi_{k+1}=	\GProx_{\alpha G_h,-\mathcal{L}_h^{-1}}\left(\Phi_k+ \alpha \mathcal{L}_h\nabla
	F_h[\Phi_k]\right)=\argmin\limits_\Phi\big\{\alpha G_h[\Phi]+\frac{1}{2}\|\Phi-\Phi_k- \alpha  \mathcal{L}_h
	\nabla F_h[\Phi_k] \|_{-\mathcal{L}_h^{-1}}^2\big\}$, we have
	\begin{equation}
	0= \alpha \nabla G_h[\Phi_k] -
	\mathcal{L}_h^{-1}(\Phi_{k+1}-\Phi_k-\alpha \mathcal{L}_h\nabla F_h[\Phi_k])
	\end{equation}
	from the first order optimality condition which implies semi-implicit numerical scheme.
\end{proof}
\begin{remark}
	It is noted that $\mathcal{L}_h=-\mathcal{I}$ in Allen-Cahn equation and $\mathcal{L}_h = \Delta$ in Cahn-Hilliard when $M_\phi=1$. Based on our analysis, it is suggested that  $\mathcal{L}_h=\Delta-\tau\mathcal{I}$ for some $\tau>0$ when applying the Cahn-Hilliard equation. The implicit scheme for $\mathcal{L}_h=\Delta-\tau I$ is the gradient step of the viscosity solution for certain Hamilton-Jacobi equation as pointed out in \cite{osher2018laplacian}. Moreover, If $\mathcal{L}_h=(\Delta-\tau I)^{-1}$ for some $\tau>0$ in \eqref{connection}, the explicit scheme is the (generalized) Laplacian smoothing introduced \cite{osher2018laplacian}.
\end{remark}
\begin{remark}
	The APG method can be formulated as 
	\begin{equation*}
	\Phi_{k+1} = \GProx_{\alpha G_h,\mathcal{I}}\left(\tilde\Phi_k- \alpha \nabla
	F_h[\tilde\Phi_k]\right),~\tilde\Phi_k = \Phi_k+w_k(\Phi_k-\Phi_{k-1}),
	\end{equation*}
	for some $w_k\in(0,1)$. When the objective function is
	convex, the extrapolation step has been proved to accelerate
	convergence. Meanwhile, from the perspective of interpolation
	methods, $\tilde\Phi_k$ can also be thought as an
	approximation of the implicit step. It is the Lagrange
	interpolation when $w_k=1$.
\end{remark}
\begin{remark}
	The energy dissipation is related to the objective
	value decreasing property of the generalized proximal
	operators in \eqref{connection}; the adaptive time stepping
	corresponds to the adaptive step sizes $\alpha_k$ which can
	be efficiently implemented by the line search as shown in
	Algorithm \ref{alg:eststep}.
\end{remark}
As the semi-implicit approach is not unconditional energy
dissipation, stabilized methods have been proposed
\cite{xu2006stability}.
In concrete, the stabilized semi-implicit scheme contains 
\begin{equation}\label{stat_sis}
\frac{\Phi_{k+1}-\Phi_k}{ \alpha } = \mathcal{L}_h(\nabla
G_h[\Phi_{k+1}]+\nabla F_h[\Phi_k]+\sigma(\Phi_{k+1}-\Phi_k)).
\end{equation}
for some $\sigma>0$. Suppose $\mathcal{L}_h$ is invertible and  $\mathcal{S}=-(\mathcal{I}-\sigma \alpha \mathcal{L}_h)^{-1}\mathcal{L}_h$ is positive symmetric, the above scheme is equivalent to 
\begin{equation}
\begin{aligned}
& (\mathcal{I}-\sigma  \alpha \mathcal{L}_h)(\Phi_{k+1}-\Phi_k) =
\mathcal{L}_h \alpha ( \nabla G_h[\Phi_{k+1}]+\nabla F_h[\Phi_k])\\
\Longleftrightarrow \quad & \Phi_{k+1} = \GProx_{ \alpha  G_h,
\mathcal{S}^{-1}}(\Phi_k- \alpha \mathcal S\nabla F_h[\Phi_k]).
\end{aligned}
\end{equation}
In Allen-Cahn equation, $\mathcal{L}_h=-\mathcal{I}$, all the
required conditions are automatically satisfied. However, in
Cahn-Hilliard equation, the corresponding conditions require
further exploration.
In general case, discovering the deep connections between the
gradient flow and the proximal operators may provide new insights
for both fields and we will explore it in future.

\section{Numerical results and discussions}
\label{sec:result}

In this section, we present several numerical examples to
illustrate the efficiency and accuracy of our method by comparing
with the semi-implicit scheme (SIS). 
All experiments were performed on a workstation with a 3.20 GHz
CPU (i7-8700, 12 processors). All code were written by MATLAB
language without parallel implementation.
In our experiments, the Algorithm \ref{alg:adaAPG} is employed to
calculate the stationary states of finite dimensional PFC models,
including the LB energy functional \eqref{eq:LB} with the Fourier
pseudospectral discretization $E_h$ (see
Eq.\,\eqref{LB:Discrete}) for periodic crystals and the
LP energy functional with the projection
method discretization for quasicrystals.  
Let $\Phi_s$ be the ``exact'' stationary state obtained numerically
with a very fine mesh and $E_s = E_h[\Phi_s]$ be its energy.
Correspondingly, let $\Phi_{s,h}$ be the numerical stationary
state obtained with the mesh size $h$ and $E_h[\Phi_{s,h}]$ be
its energy.

\subsection{Periodic crystals}

For the LB model, we use three dimensional periodic crystals of
the double gyroid and the sigma phase,
recently discovered both in polymer experiments and in theoretical
computations\,\cite{zhang2008efficient, lee2010discovery}, to
demonstrate the performance of our approach. 

\subsubsection{Double gyroid}
\label{subsec:dg}

The double gyroid phase is a continuous network periodic phase. 
Its initial value is 
\begin{equation}
\begin{aligned}
	\phi(\br) = \sum_{\bh\in\Lambda_0^{DG}} \hphi(\bh) e^{i (\bB\bh)^T
	\cdot \br },
	\label{eq:LB:initial}
\end{aligned}
\end{equation}
where initial lattice points set $\Lambda_0^{DG}\subset\bbZ^3$ only on
which the Fourier coefficients located are nonzero.
The corresponding $\Lambda_0^{DG}$ of the double gyroid phase
is given in the Table \ref{tab:DG:initial}. For more details, please refer to \cite{jiang2013discovery}.
\begin{table}[htbp]
  \centering
  \caption{The initial lattice points set $\Lambda_0$ of the double
  gyroid phase. $^o$ denotes the sign of Fourier coefficients is opposite.
  }
  \label{tab:DG:initial}
  \begin{tabular}{|c|c|}
 \hline
 $\Lambda_0^{DG}$  &
\makecell{
				$(-2,1,1)$,
				$(2,1,1)^o$, 
				$(2,1,-1)^o$, 
				$(2,-1,1)$, 
				$(1,-2,1)$, 
				$(1,2,-1)$,
			\\  
		  		$(1,2,1)^o$, 
				$(-1,2,1)^o$,
                $(1,1,-2)$, 
				$(1,-1,2)^o$, 
				$(-1,1,2)$,
                $(1,1,2)^o$
				}
               \\\hline
   \end{tabular}
\end{table}
The double gyroid structure belongs to the cubic crystal system,
therefore, the $3$-order invertible matrix can be chosen as 
$\bB= (1/\sqrt{6})\bI_3$. Correspondingly, the computational
domain in physical space is $[0,2\sqrt{6}\pi)^3$.
The parameters in LB model \eqref{eq:LB} are set as 
$\xi = 0.1, \tau = -2.0, \gamma = 2.0$.

The exact solution is obtained numerically by using $256\times 256
\times 256$ spatial discretization points, and its exact energy 
with such model parameters is $E_s = -12.9429155189828$. 
Table \ref{tab:DG:accuracy} presents the
numerical error for the double gyroid phase. From Table
\ref{tab:DG:accuracy}, it is observed that the Fourier pseudospectral method
is spectral accuracy. 
Figure \ref{fig:DG} shows the morphology of stationary double
gyroid phase.

\begin{table}[htbp]
	\centering
	\caption{Accuracy of the Fourier pseudospectral discretization for
	the double gyroid phase in the LB model simulations.  
	The solution with $ 256 \times 256 \times 256 $ is used as reference solution.
	}
	\label{tab:DG:accuracy}
	\begin{tabular}{|c|c|c|c|c|}
	\hline
	 & DOF & $ 32^3 $ & $ 64^3 $ & $ 128^3 $ 
	\\ \hline 
	Double gyroid   &
	\makecell{
	$\|\Phi_s - \Phi_h\|_2$
	\\
	$|E_s - E_h(\Phi_{s,h})|$
	}
	& 
	\makecell{ 6.2770e-05 \\ 4.9949e-02 
	}
	& 
	\makecell{ 7.7191e-08 \\ 2.3984e-06 
	}
	& 
	\makecell{ 7.0668e-12 \\ 1.0658e-14 
	}
	\\ \hline 
   \end{tabular}
\end{table}
\begin{figure}[htbp]
	\centerline{\includegraphics[scale=0.4]{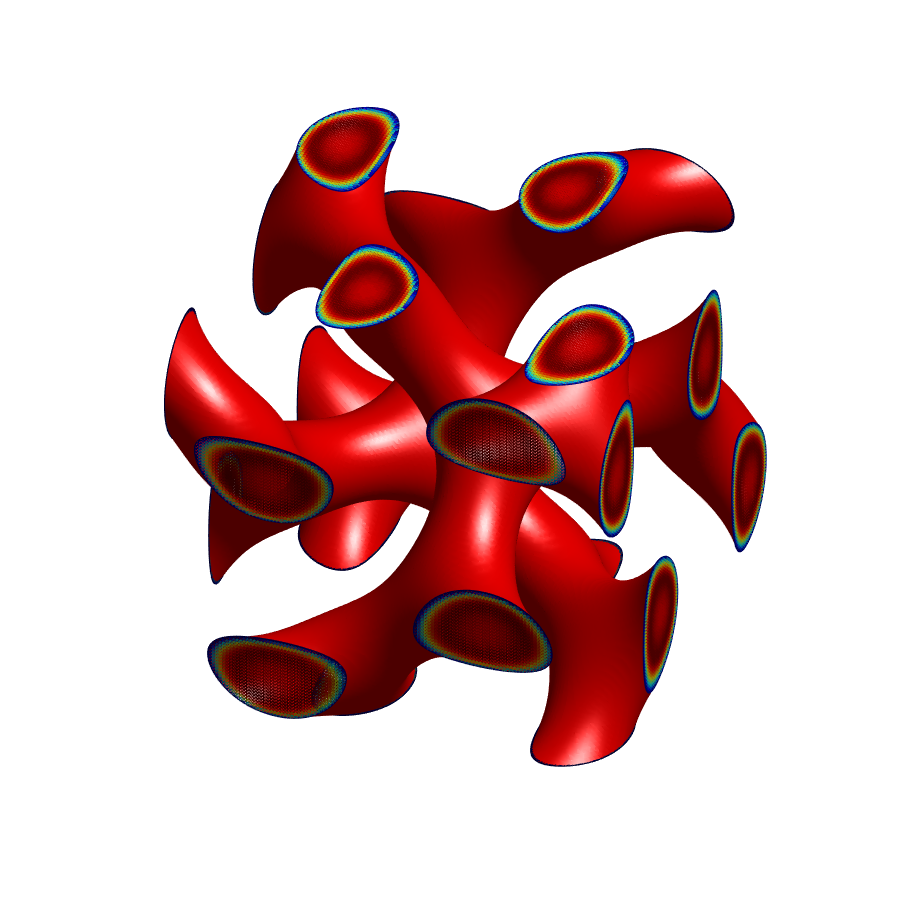}}
	\caption{\label{fig:DG}
{\small	The stationary double gyroid phase in LB model
	with $\xi = 0.1, \tau = -2.0, \gamma = 2.0$. }
	}
\end{figure}

In order to demonstrate the performance of our proposed method, a
convergent comparison between the adaptive APG method and the 
SIS for the energy difference is shown in Figure
\ref{fig:DG:comparison}, using $128\times 128 \times 128$ spacial
discretization points.  
In the SIS, the time step $ \alpha $ is fixed, while in adaptive APG
approach, $ \alpha $ can be obtained adaptively by the linear search
technique, as given in Figure \ref{fig:DG:step}.
In comparison, the fixed time step of the SIS is chosen 
as $0.2$ to guarantee the best performance on the premise
of energy dissipation. It is shown that the adaptive APG algorithm converges
faster than the SIS.
\begin{figure}[htbp]
	\centering
	\includegraphics[scale=0.4]{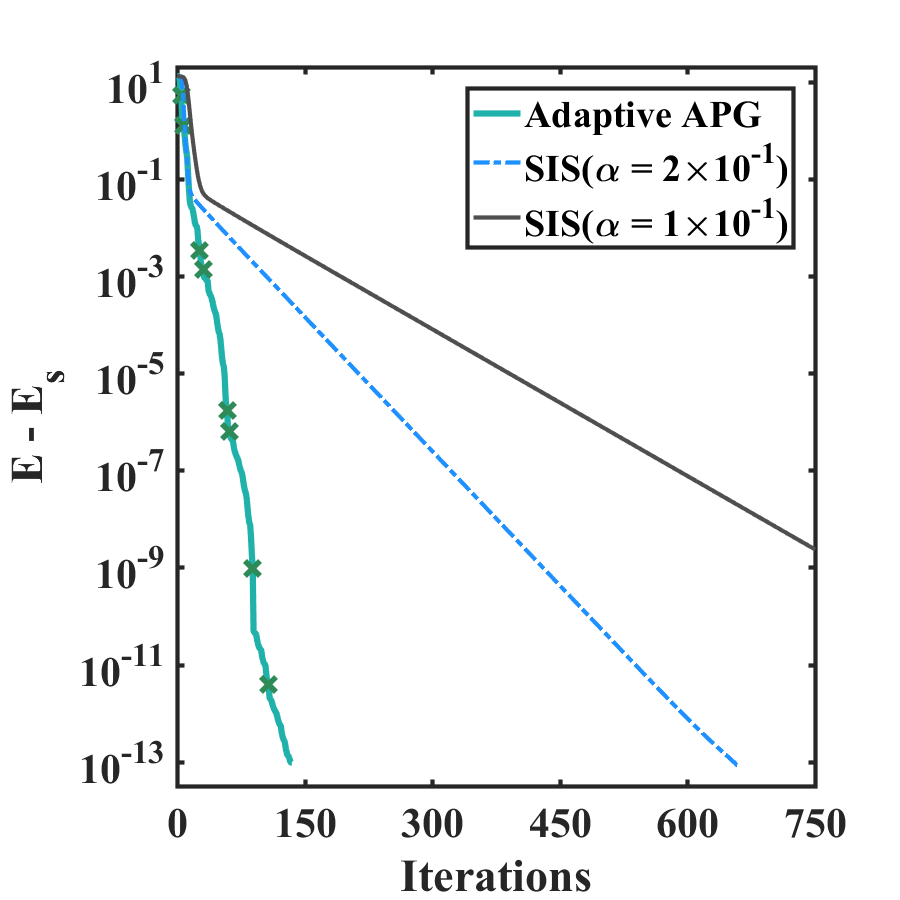}
	\includegraphics[scale=0.4]{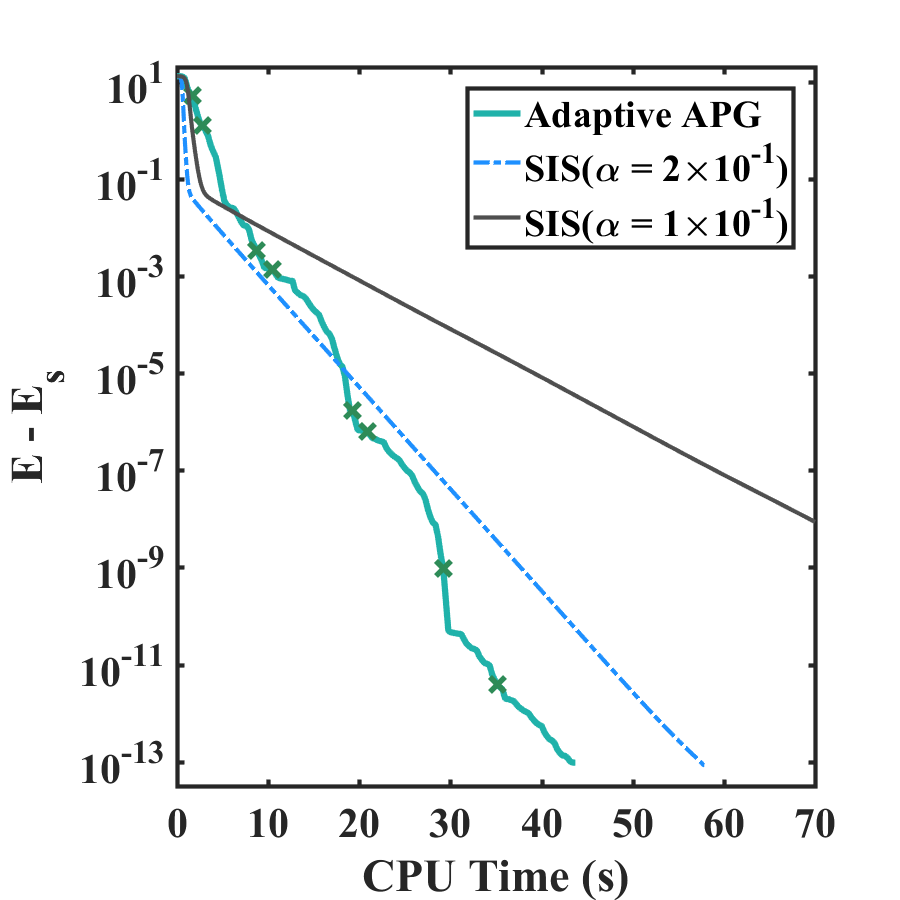}
	\caption{\label{fig:DG:comparison}
	Comparison of convergence across different algorithms for
	computing the double gyroid phase.
	The vertical axis is the difference between the energy value
	in current step and the lowest attained energy value. 
	On left, the horizontal axis is the number of
	iterations. On right, the horizontal axis is time taken.
	The $\times$s mark where restarts occurred.
	}
\end{figure}
\begin{figure}[htbp]
	\centering
	\includegraphics[scale=0.4]{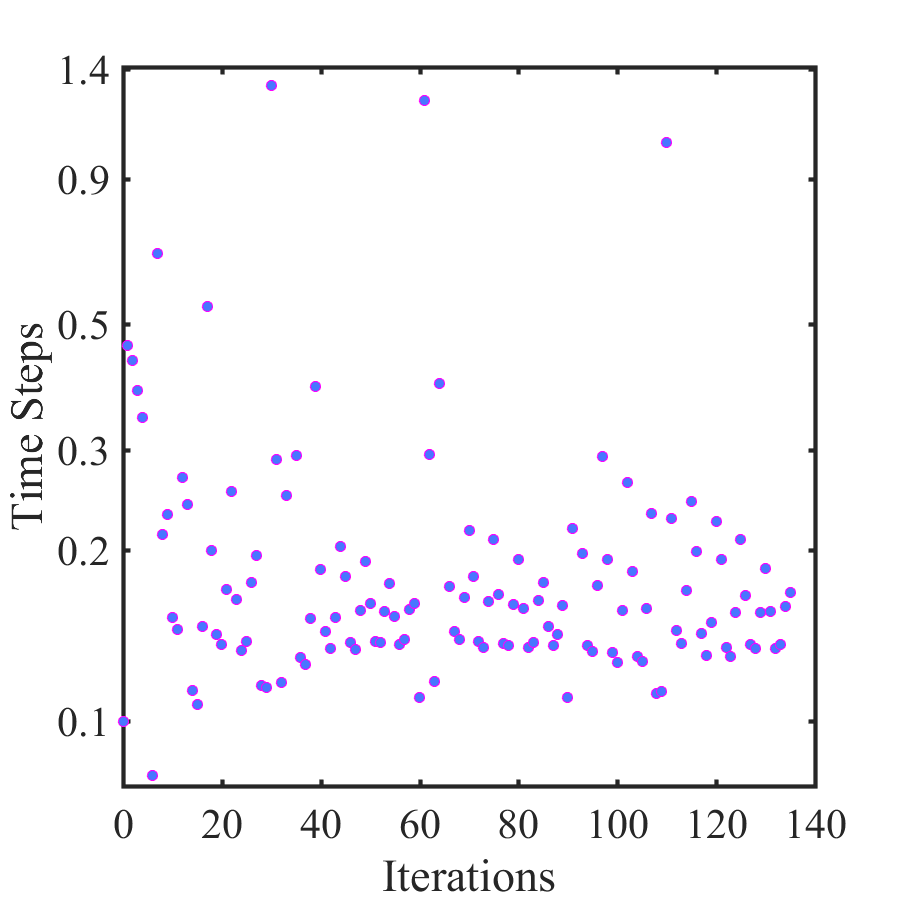}
	\caption{\label{fig:DG:step}
	The adaptive time steps obtained by the adaptive APG in
	computing the double gyroid phase.
	}
\end{figure}
In particular,
The adaptive APG needs $149$ steps to achieve the error level of
$10^{-13}$, while the SIS requires $660$ iterations for 
$ \alpha  = 0.2$ and $1190$ steps when $ \alpha  = 0.1$. 
Our proposed approach requires the linear search techniques to
obtain the adaptive time step length, it may spend more time than fixed
$ \alpha $ method in each iteration sometimes. However, due to
the adaptive strategy, our proposed approach still costs less
CPU time than the SIS. 

\subsubsection{Sigma phase}
\label{subsec:sigma}

The second periodic structure considered here is the sigma phase,
which is a complicated spherical packed phase recently discovered in
block copolymer systems\,\cite{lee2010discovery}. 
For such a pattern, we implement our algorithm 
on bounded computational domain $[0, 27.7884)\times [0,
27.7884)\times [0, 14.1514)$. The
initial values are obtained from \cite{xie2014sigma,
arora2016broadly}.
When computing the sigma phase, the parameters 
are set as $\xi = 1.0, \tau = 0.01, \gamma = 2.0$. 
The exact solution is obtained numerically by using 
$256\times 256 \times 128$ spatial discretization points whose
morphology is presented in Figure \ref{fig:sigma}.
Correspondingly, the convergent energy value is $E_s = -0.93081648457086$. 
As far as we know, it is the first time to find such 
complicated sigma phase in such a simple PFC model.

\begin{table}[htbp]
	\centering
	\caption{Accuracy of the Fourier pseudospectral discretization for
	the sigma phase in the LB model simulations.  
	The solution with $ 256 \times 256 \times 128 $ is used as reference solution.
	}
	\label{tab:sigma:accuracy}
	\begin{tabular}{|c|c|c|c|c|}
	\hline
	 & DOF & $ 128 \times 128 \times 64 $ & $ 160 \times 160 \times 80 $ & $ 200 \times 200 \times 100 $
	\\ \hline 
	Sigma   &
	\makecell{
	$\|\Phi_s - \Phi_h\|_2$
	\\
	$|E_s - E_h(\Phi_{s,h})|$
	}
	& 
	\makecell{ 2.2710e-06 \\ 4.2930e-03 
	}
	& 
	\makecell{ 7.1800e-11 \\ 2.3648e-14 
	}
	& 
	\makecell{ 7.3107e-12 \\ 2.3315e-15 
	}
	\\ \hline 
   \end{tabular}
\end{table}

\begin{figure}[htbp]
	\centering
	\includegraphics[scale=0.3]{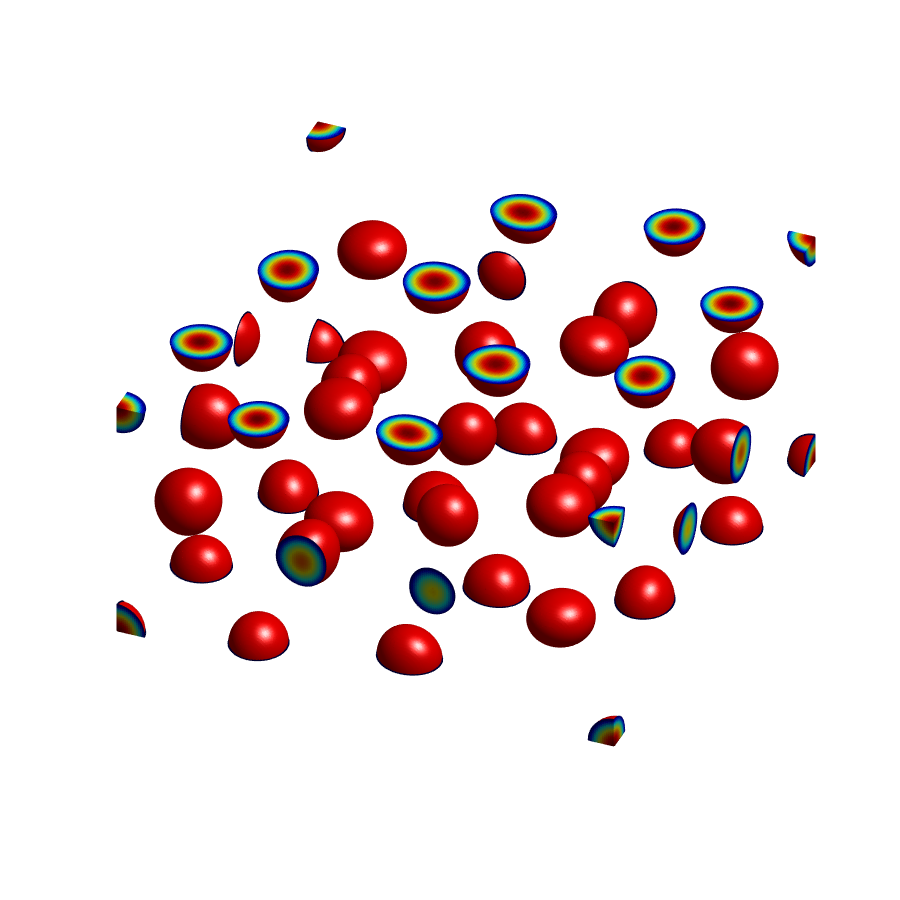}
	\hspace{1cm}
	\includegraphics[scale=0.28]{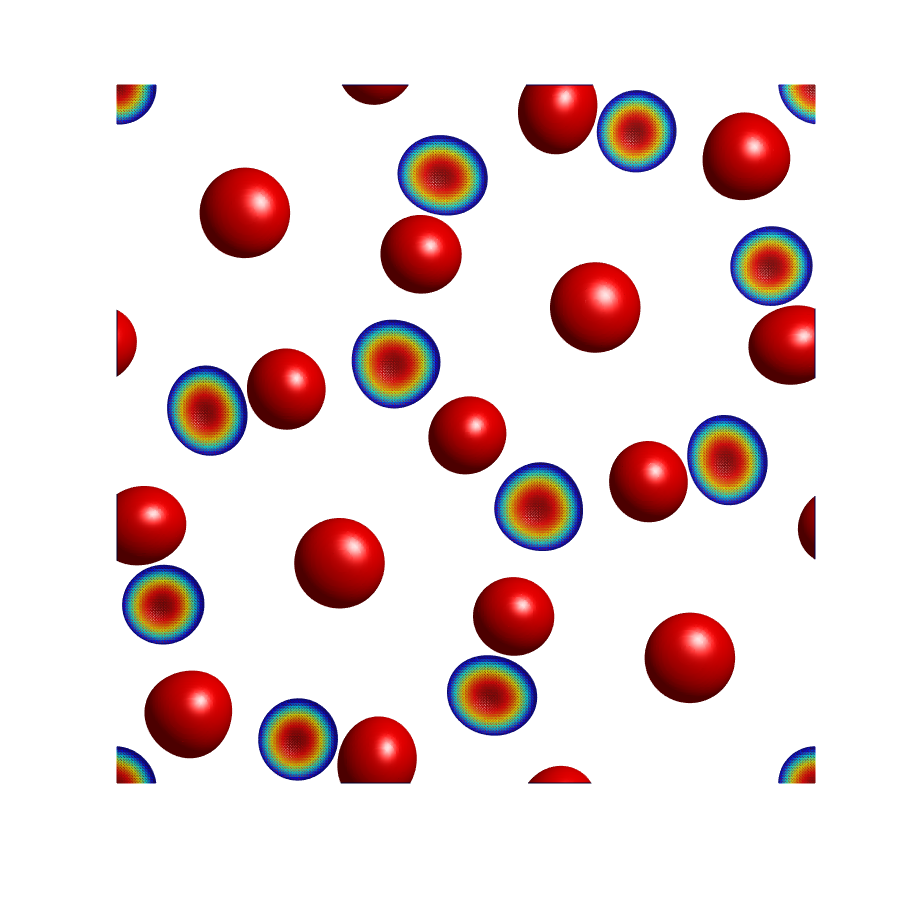}
	\caption{\label{fig:sigma}
	{\small The stationary sigma phase from two views in LB model 
	with $\xi = 1.0, \tau = 0.01, \gamma = 2.0$.}
	}
\end{figure}
Figure \ref{fig:sigma:comparison} gives the convergence between
the adaptive APG method and the SIS for the energy difference. Again, on
the premise of energy dissipation, the time step $ \alpha $ in
the SIS is chosen as $0.4$ to demonstrate its best
performance. Our proposed algorithm still can obtain adaptive
time step by the linear search technique, as demonstrated in
Figure \ref{fig:sigma:step}. Obviously, from Figure
\ref{fig:sigma:comparison}, the adaptive APG algorithm is more
efficient than the SIS. 
In concrete, the adaptive APG approach reaches an error about
$10^{-13}$ in $247.3$ secs with $174$ iterations.
The SIS with fixed step length $ \alpha =0.4$ ($0.3$)
requires $851$ ($1086$) iterations and $370.3$ ($474.5$) secs to
achieve the same accuracy.
\begin{figure}[htbp]
	\centering
	\includegraphics[scale=0.4]{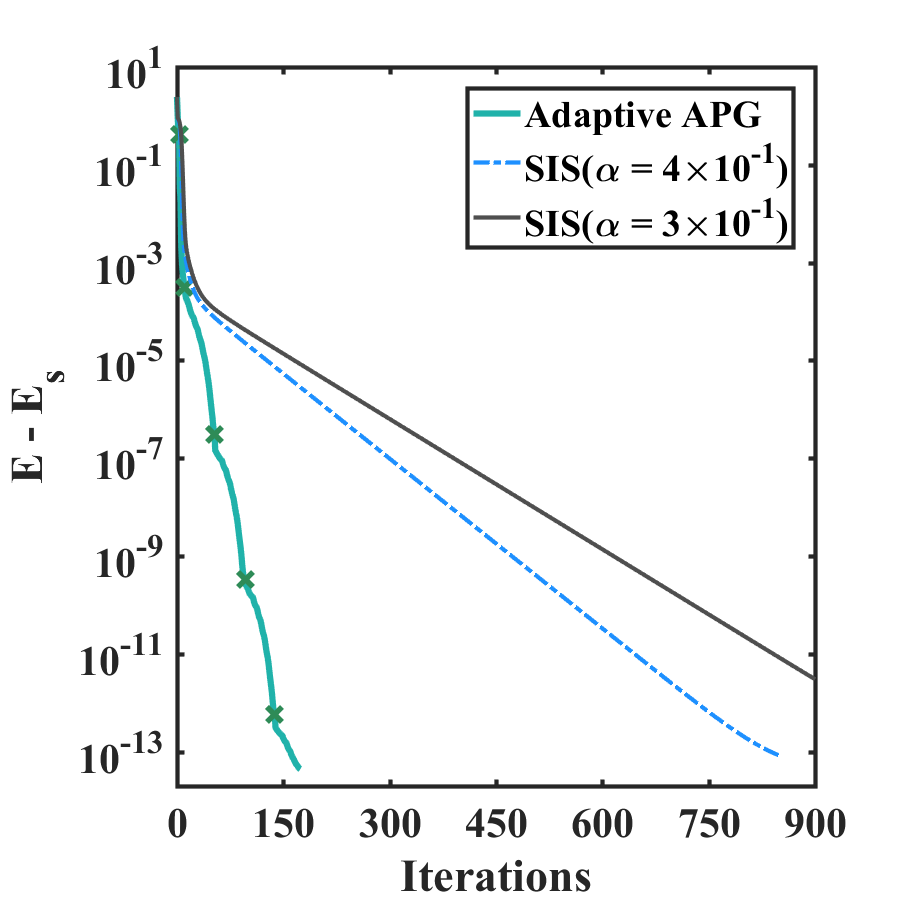}
	\includegraphics[scale=0.4]{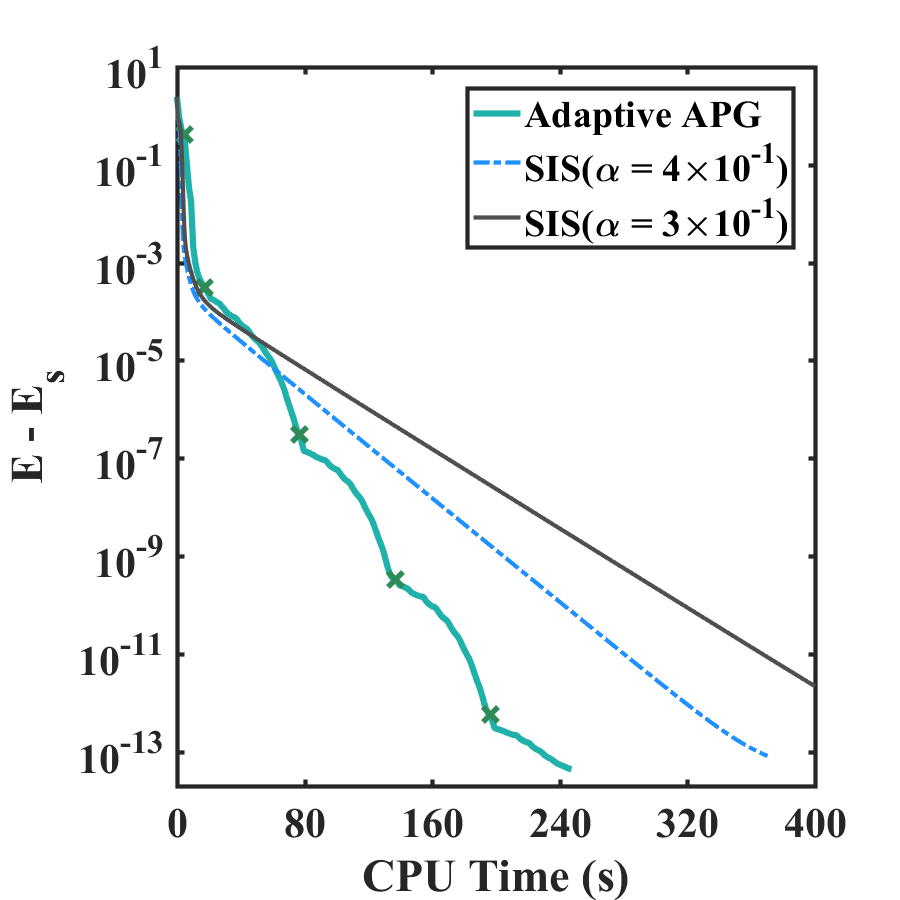}
	\caption{\label{fig:sigma:comparison}
	Comparison of convergence across different algorithms for
	computing the sigma phase.
	The vertical axis is the difference between the energy value
	in current step and the lowest attained energy value. 
	On left, the horizontal axis is the number of
	iterations. On right, the horizontal axis is time taken.
	The $\times$s mark where restarts occurred.
	}
\end{figure}
\begin{figure}[htbp]
	\centering
	\includegraphics[scale=0.4]{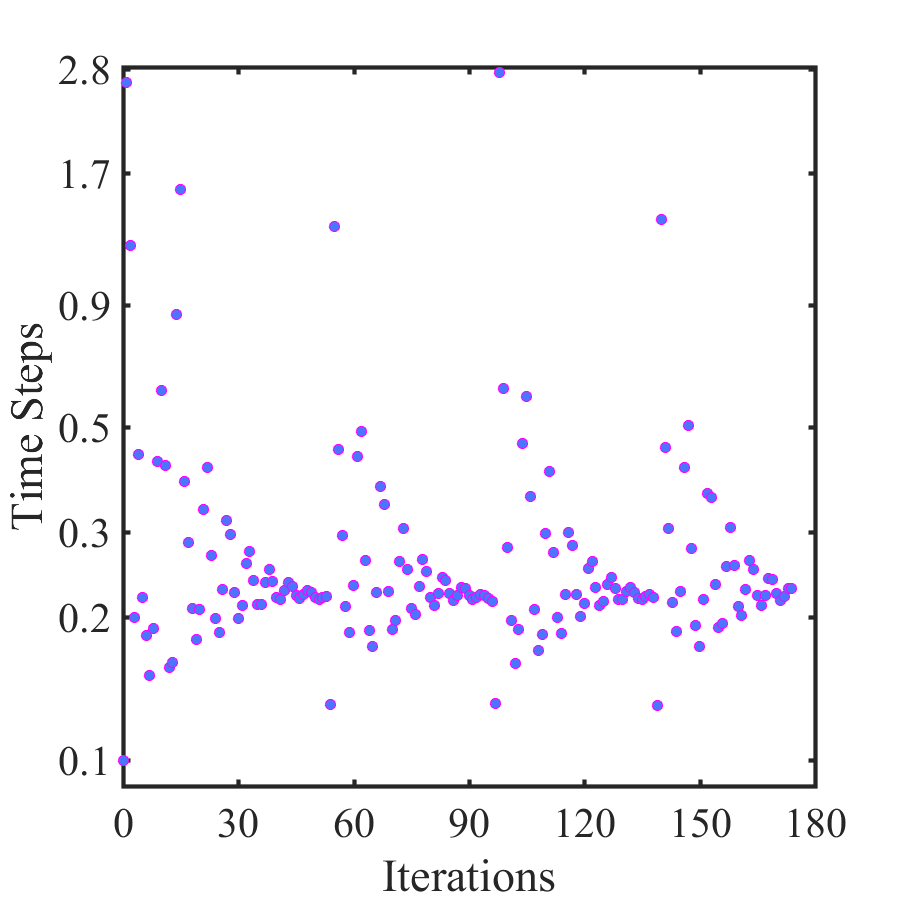}
	\caption{\label{fig:sigma:step}
	The adaptive time steps obtained by the adaptive APG in
	computing the sigma phase.
	}
\end{figure}

\subsection{Quasicrystals}
\label{subsec:qc}

For the LP free energy, we take the two dimensional dodecagonal
quasicrystal as an example to examine the performance of our
proposed approach. 
For dodecagonal quasicrystals, two length scales $q_1$
and $q_2$ equal to $1$ and $2\cos(\pi/12)$, respectively.
Two dimensional dodecagonal quasicrystals can be embedded into
four dimensional periodic structures, therefore, the projection
method is required to implement in four dimension.
The $4$-order invertible matrix $\bB$ associated with to
four dimensional periodic structure is chosen as $\bI_4$. 
The corresponding computational domain in real space is $[0,2\pi)^4$. 
The projection matrix $\calP$ in Eq.\,\eqref{eq:pm} of the
dodecagonal quasicrystals is
\begin{equation}
	\mathcal{P} =\left(
	\begin{array}{cccc}
		1 & \cos(\pi/6) & \cos(\pi/3) & 0 \\
		0 & \sin(\pi/6) & \sin(\pi/3) & 1 
	\end{array}
\right).
\label{eqn:DDQC:projMatrix}
\end{equation}
The initial solution is 
\begin{equation}
\begin{aligned}
	\phi(\br) = \sum_{\bh\in\Lambda^{QC}_0} \hphi(\bh)
    e^{i[(\mathcal{P}\cdot\mathbf{B}\bh)^{T}\cdot\br]},
    ~~\br\in\mathbb{R}^2,
\end{aligned}
\end{equation}
where initial lattice points set $\Lambda_0^{QC}\subset\bbZ^4$ on
which the Fourier coefficients $\hphi(\bh)$ located are nonzero of
dodecagonal quasicrystal is given in the Table \ref{tab:LP:initial}.
\begin{table}[!htbp]
\caption{\label{tab:LP:initial} The initial lattice points 
$\Lambda_0^{QC}$ of dodecagonal quasicrystals.}
\centering
\begin{tabular}{|c|c|}
\hline
$\bh \in \Lambda_0^{QC}$ &
\makecell{
	(0     1     0   -1)
	(0    -1     0    1)
	(1     0     0    0)
   (-1     0     0    0)
	(0     1     0    0)
	(0    -1     0    0)
	\\
	(0     0     1    0)
	(0     0    -1    0)
	(0     0     0    1)
	(0     0     0   -1)
   (-1     0     1    0)
	(1     0    -1    0)
	\\
	(1     1     0   -1)
   (-1    -1     0    1)
	(1     1     0    0)
   (-1    -1     0    0)
	(0     1     1    0)
	(0    -1    -1    0)
	\\
	(0     0     1    1)
	(0     0    -1   -1)
   (-1     0     1    1)
	(1     0    -1   -1)
   (-1    -1     1    1)
	(1     1    -1   -1)
	}
\\\hline
\end{tabular}
\end{table}

\begin{figure}[htbp]
	\centering
	\includegraphics[scale=0.4]{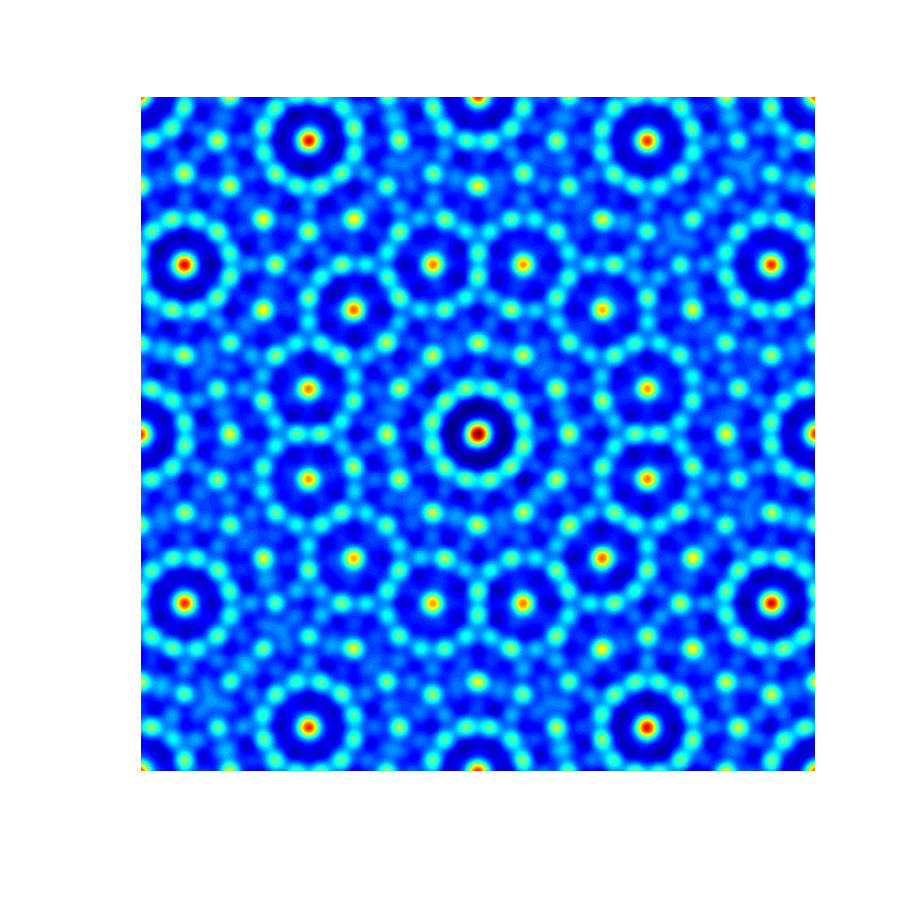}
	\includegraphics[scale=0.4]{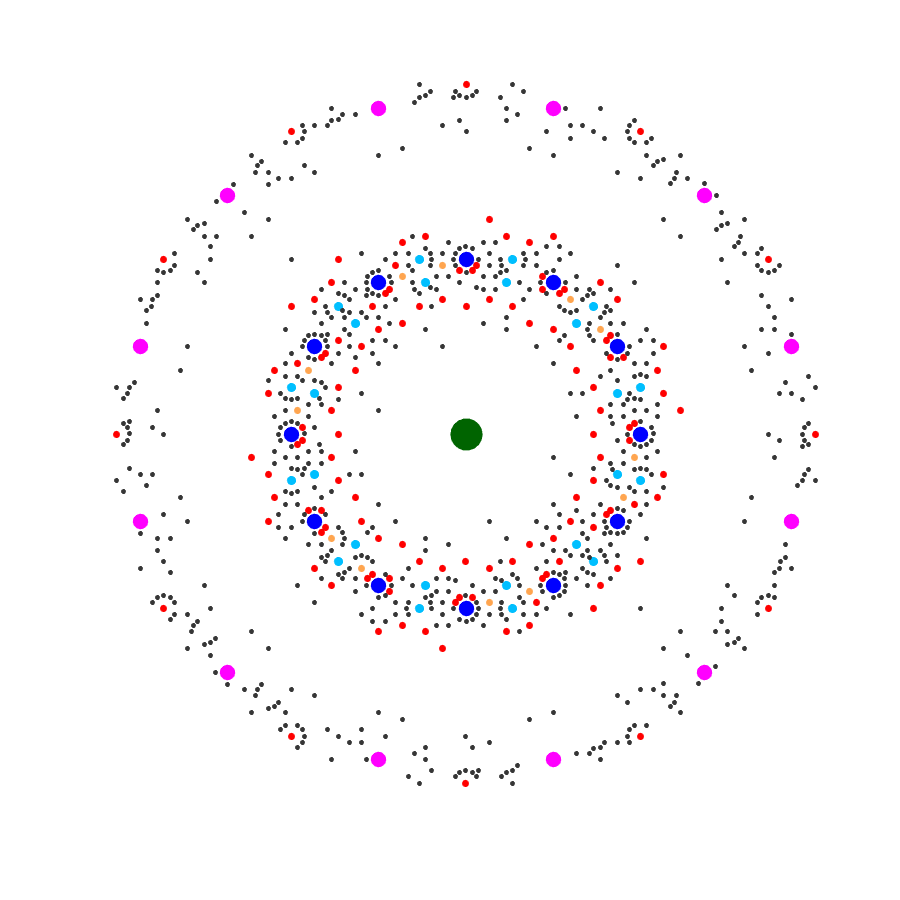}
	\caption{\label{fig:DDQC}
	The stationary dodecagonal quasicrystal in LP model 
	with $c = 1.5, \varepsilon = -6.0, \kappa = 0.3$. 
	The left plot is the physical morphology. 
	The right subfigure is the Fourier spectral
	points whose coefficient intensity is larger than $0.01$. 
	}
\end{figure}

When computing the dodecagonal quasicrystal,
the parameters in LP model are set as $c = 1.5, \varepsilon =
-6.0, \kappa = 0.3$, and $38^4$ spatial discretization points are used.
The convergent stationary quasicrystal including its morphology
and Fourier spectrum is given in Figure \ref{fig:DDQC}. The
finally convergent energy value obtained by the adaptive APG
approach is $E_s=-5.76164741513328$. The iterative behavior of
our proposed method and the SIS with different fixed time steps,
$ \alpha =0.1, 0.05, 0.005$, is found in
Figure \ref{fig:DDQC:comparison}. The adaptive time steps of our
proposed approach is given in Figre \ref{fig:DDQC:step}.
In the SIS, the energy difference decreases to the error level of about
$10^{-6.8}$, then increases for all given time step $ \alpha $.
However, the adaptive APG algorithm is always energy
dissipation as Theorem \ref{thm:convergence} predicted. These
results demonstrates that the adaptive APG approach is more robust for finding the
stationary states.

\begin{figure}[htbp]
	\centering
	\includegraphics[scale=0.4]{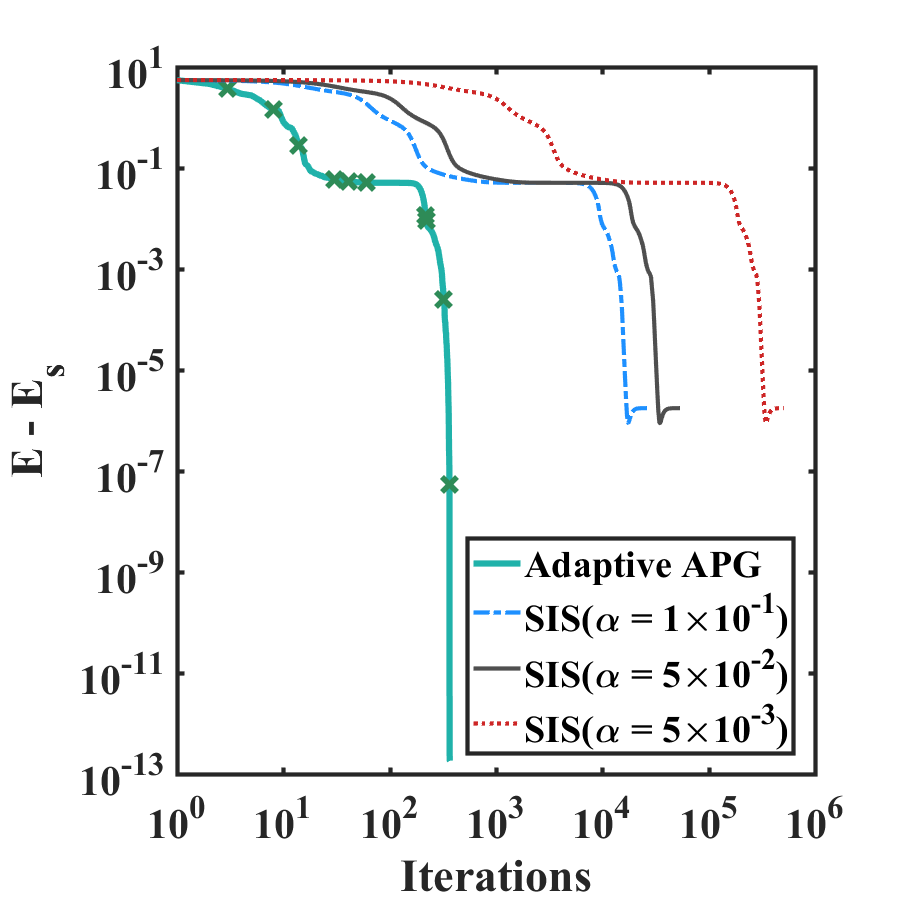}
	\includegraphics[scale=0.4]{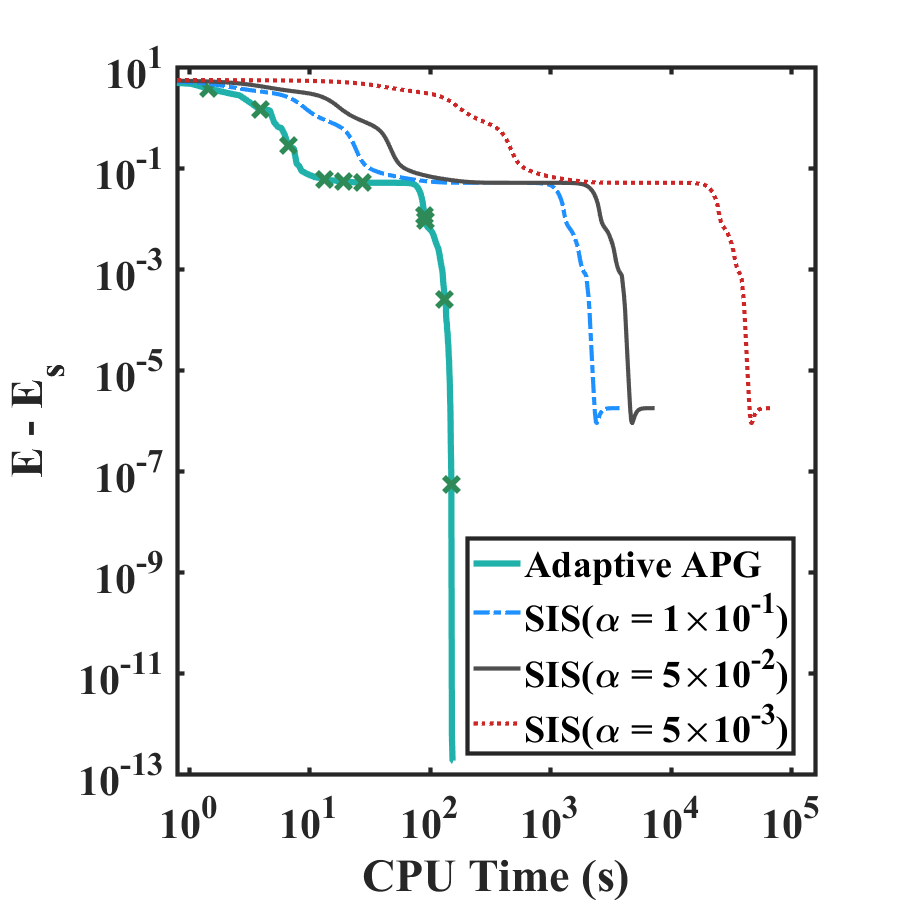}
	\caption{\label{fig:DDQC:comparison}
	Comparison of convergence across different algorithms for
	computing the dodecagonal phase.
	The vertical axis is the difference between the energy value
	in current step and the lowest attained energy value. 
	On left, the horizontal axis is the number of
	iterations. On right, the horizontal axis is time taken.
	The $\times$s mark where restarts occurred.
	}
\end{figure}
\begin{figure}[htbp]
	\centering
	\includegraphics[scale=0.4]{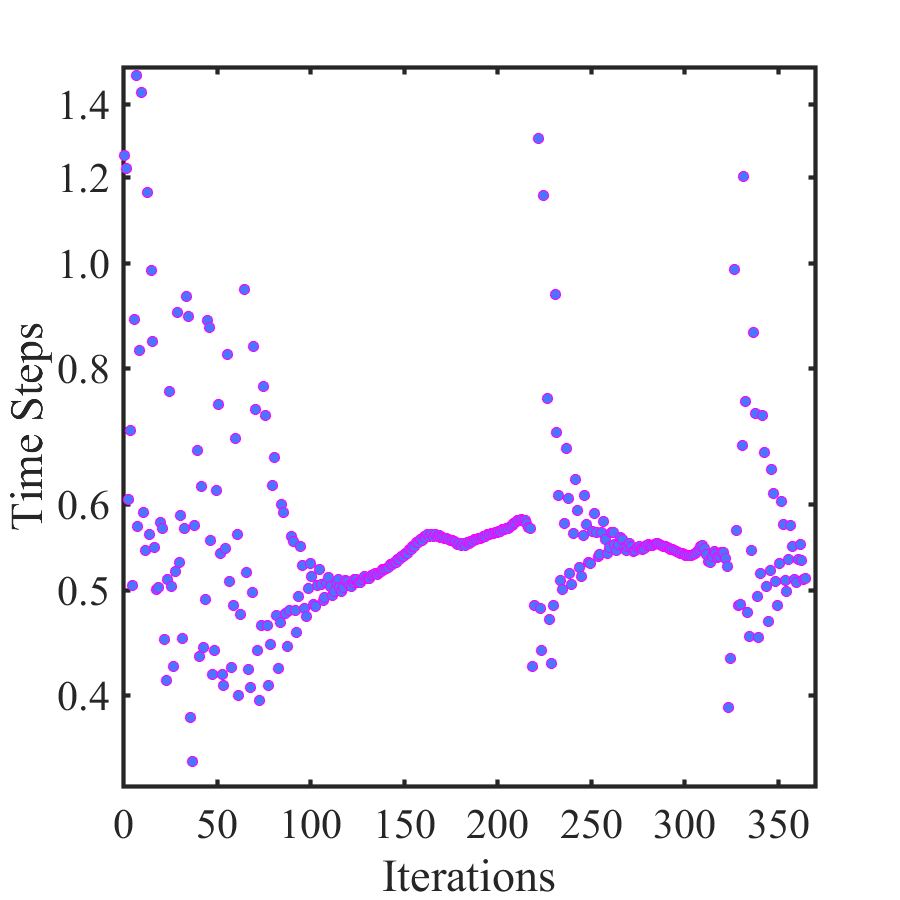}
	\caption{\label{fig:DDQC:step}
	The adaptive time steps obtained by the adaptive APG in
	computing the dodecagonal phase.
	}
\end{figure}

\section{Conclusion}
\label{sec:conclusion}
In this paper, a fast, efficient, and robust computational
approach has been proposed to find the stationary states of PFC
models. The adaptive APG method is obtained through a combination
of the SIS and the restart APG approach.
Instead of formulating the energy
minimization as a gradient flow, we applied the adaptive APG
method directly on the discretized energy with proved local
convergence. Extensive results in computing periodic crystals and
quasicrystals have shown its advantage in terms of
computation efficiency without loss of the accuracy. Moreover,
the preliminary connections between the numerical schemes in
solving gradient flow and the generalized proximal operator are
present in this work and motivate us to continue finding its deep
relationship in future.

\section*{Acknowledgements} 

This work is supported by the National Natural Science Foundation
of China (11771368) and the Project of Scientific Research
Fund of Hunan Provincial Science and Technology Department (2018WK4006).
KJ is partially supported by the Hunan Science Foundation of China
(2018JJ2376), the Youth Project Hunan Provincial Education Department
of China (Grant No. 16B257),

\end{document}